\newtheorem{assumption}{Assumption}[section]
\newtheorem{theorem}{Theorem}[section]
\newtheorem{lemma}[theorem]{Lemma}
\theoremstyle{definition}
\newtheorem{definition}[theorem]{Definition}
\newtheorem{example}[theorem]{Example}
\newtheorem{corollary}[theorem]{Corollary}
\theoremstyle{remark}
\newtheorem{remark}[theorem]{Remark}
\numberwithin{equation}{section}
\newcommand\bD{\mathbb{D}}
\newcommand\bG{\mathbb{G}}
\newcommand\bR{\mathbb{R}}
\newcommand\bB{\mathbf{B}}
\newcommand\bA{\mathbf{A}}
\newcommand\fra{\mathfrak{a}}
\newcommand\frb{\mathfrak{b}}
\newcommand\frD{\mathfrak{D}}
\newcommand\frL{\mathfrak{L}}
\newcommand\frM{\mathfrak{M}}
\newcommand\frF{\mathfrak{F}}
\newcommand\frG{\mathfrak{G}}
\newcommand\opar{\text{\raise.2ex\hbox{${\scriptstyle | }$}\kern-.34em$($} }
\newcommand\cbrk{\text{$]$\kern-.15em$]$}} 
\newcommand\cB{\mathcal{B}}
\newcommand\cF{\mathcal{F}}
\newcommand\cH{\mathcal{H}}
\newcommand\cK{\mathcal{K}}
\newcommand\cN{\mathcal{N}}
\newcommand\cP{\mathcal{P}}
\newcommand{\E}{\mathbb{E}}
\newcommand{\R}{\mathbb{R}}
\newcommand{\G}{\mathbb{G}}
\begin{document}

\title[Localization errors]{Localization errors in solving  stochastic partial differential equations in the whole space}

\author[M. Gerencs\'er]{M\'at\'e Gerencs\'er}
\address{School of Mathematics and Maxwell Institute,
The University of Edinburgh, 
The King's Buildings, 
Peter Guthrie Tait Road, 
EDINBURGH, 
EH9 3FD
Scotland, UK}
\email{m.gerencser@sms.ed.ac.uk}

\author[I. Gy\"ongy]{Istv\'an Gy\"ongy}
\address{School of Mathematics and Maxwell Institute,
The University of Edinburgh, 
The King's Buildings, 
Peter Guthrie Tait Road, 
EDINBURGH, 
EH9 3FD
Scotland, UK}
\email{gyongy@maths.ed.ac.uk}

\subjclass[2010]{Primary 60H15, 60H35, 65M06 }
\keywords{Cauchy problem, degenerate stochastic parabolic PDEs,  
localization error, finite difference method} 

\date{}

\dedicatory{}

\begin{abstract}
Cauchy problems with SPDEs on the whole space are localized to 
Cauchy problems on a ball of radius $R$.  This localization reduces various kinds of spatial approximation schemes to finite dimensional problems. The error is shown to be exponentially small. 
As an application, a numerical scheme is presented which combines 
the localization and the space and time discretization, and thus is fully implementable. 
\end{abstract}

\maketitle

\section{Introduction}

Parabolic, possibly degenerate, linear stochastic partial differential equations (SPDEs) are considered. In applications such equations are often given on the whole space, which makes the direct implementation of discretization methods problematic. 
Finite element methods, see e.g., \cite{DP}, \cite{Hau}, \cite{JK}, \cite{W}, \cite{Y} and their references, mostly treat problems on bounded domains and often under strong restrictions on the differential operators, denoted by $L$ and $M$ below. For finite difference schemes convergence results are available on the whole space, see e.g. \cite{Yoo} for the non-degenerate and \cite{DK}, \cite{Gy} for the degenerate case,  but the schemes themselves are infinite systems of equations. A natural way to overcome this difficulty is to ``cut off'' the equation outside of a large ball of radius $R$. A similar approach to obtain error estimates for a truncated terminal condition in deterministic PDEs of optimal stopping problems is used in \cite{Siska}. 

The main results of the present paper are 
Theorems \ref{theorem cutoff} and \ref{theorem main2}. 
Theorem \ref{theorem cutoff} compares solutions of two SPDEs whose data agrees on a ball of radius $R$ and establishes an error estimate of order $e^{-\delta R^2}$ in the supremum norm. 
The proof relies on transforming fully degenerate SPDEs to zero order equations via the method of characteristics, whose analysis goes back to \cite{Kun} and \cite{KR}, 
see also the recent work \cite{LM} and the references therein. 
Once such a result is used to estimate the difference between 
the original equation and its truncation, one can approximate 
the truncated equation with known numerical schemes. 
Our choice is the finite difference method for the spatial 
and the implicit Euler method for the temporal approximation. 
The analysis of the former is invoked from \cite{Gy}, 
while for the latter one requires an error estimate for the time 
discretization of the finite difference scheme, 
which is at this point a finite dimensional SDE. 
Of course this estimate needs to be independent of the spatial mesh size, 
and this is established in Theorem \ref{theorem time error}.  
To the authors' best knowledge such an analysis of a 
full discretization is also a new result for degenerate SPDEs, 
and in fact even in the deterministic case it improves the results 
of \cite{DK} in that we are not restricted to 
finite difference schemes which are monotone.
An error estimate for the approximations obtained by 
localized and fully discretized SPDEs is given 
in Theorem \ref{theorem main1}, which is a special case of Theorem \ref{theorem main2}, where 
the accuracy of the approximations is improved by Richardson extrapolation in the spatial 
discretization.

We mention that an alternative method for localization is to introduce artificial Dirichlet boundary conditions on a large ball, this approach is used for deterministic PDEs in, for example, 
in \cite{LL}, \cite{HRSW}, and to some extent, in \cite{Siska}. The order of error in these works is $e^{-\delta R}$, and so the method of the present paper yields an improvement even in the deterministic case. In the present context the method of artificial boundary condition would present additional issues. For instance, if the original SPDE is degenerate, then after introducing the boundary conditions the resulting equation may not even have a solution. Even if we do assume non-degeneracy, in the generality considered here - which is justified and motivated by the filtering equation in signal-observation models - there is no approximation theory of SPDEs on bounded domains with Dirichlet boundary condition, and therefore such a localization method does not help in finding an efficient numerical scheme. One indication of the problem is that regardless of the smoothness of the data and of the boundary, solutions of Dirichlet problems for SPDEs in general do not have continuous second derivatives, see \cite{KDom}.

Let us introduce some notation used throughout the paper. 
All random elements will be given on a fixed
  probability space 
$(\Omega,\cF,P)$, equipped with a filtration 
$(\cF_t)_{t\geq0}$   of  $\sigma$-fields
$\cF_{t}\subset\cF$. We suppose that
this probability space carries  
a sequence of independent Wiener processes $(w^k)_{k=1}^{\infty}$,  
adapted to the filtration $(\cF_t)_{t\geq0}$, such that 
$w^k_t-w^k_s$ is independent of $\cF_s$ 
for each $k$ and any $0\leq s\leq t$.
It is assumed that $\cF_0$ contains all $P$-null subsets of $\Omega$, 
 so that
$(\Omega,\cF,P)$ is a complete probability space and the $\sigma$-fields
$\cF_{t}$ are complete. 
By $\mathcal P$ we denote the predictable
$\sigma$-field of subsets of $\Omega\times[0,\infty)$
generated by  $(\cF_t)_{t\geq0}$. We use the shorthand notation $\E^\alpha X=[\E(X)]^\alpha.$

For $p\in[2,\infty)$  and $\vartheta\in\bR$ we denote by $L_{p,\vartheta}(\bR^{d},\cH)$ 
the space of measurable mappings 
$f$ from $\bR^d$ into a separable Hilbert space $\cH$, such that 
$$
|f|_{L_{p,\vartheta}}=
\big(\int_{\bR^d}|(1+|x|^2)^{\vartheta/2}f(x)|_{\cH}^p\,dx\big)^{1/p}<\infty. 
$$
We do not include
the symbol $\cH$ in the notation of the norm
in $L_{p,\vartheta}(\bR^{d},\cH)$. Which $\cH$ is involved will be clear 
from  the context. We  do the same in
other similar situations.
In this paper $\cH$ will be $l_2$ or $\bR$.  
The space of functions from $L_{p,\vartheta}(\bR^{d},\cH)$, 
whose generalized derivatives 
up to order $m$ are also in $L_{p,\vartheta}(\bR^{d},\cH)$, is denoted by 
$W^m_{p,\vartheta}(\bR^{d},\cH)$. 
By definition $W^0_{p,\vartheta}(\bR^{d},\cH)=L_{p,\vartheta}(\bR^{d},\cH)$. 
The norm $|f|_{W^m_{p,\vartheta}}$ of $f$ in $W^m_{p,\vartheta}(\bR^{d},\cH)$ 
is defined 
by  
\begin{equation}
                                          \label{4.11.1}
|f|^p_{W^m_{p,\vartheta}}=\sum_{|\alpha|\leq m}|D^{\alpha}f|^p_{L_{p,\vartheta}}, 
\end{equation}
where 
$D^{\alpha}:=D_1^{\alpha_1}...D_d^{\alpha_d}$ for 
{\it multi-indices} 
$\alpha:=(\alpha_1,...,\alpha_d)\in\{0,1,...\}^d$ of 
{\it length} $|\alpha|:=\alpha_1+\alpha_2+...+\alpha_d$,  
and $D_if$ is the generalized derivative of $f$ with respect to 
$x^i$ 
for $i=1,2...,d$. We also use the notation $D_{ij}=D_iD_j$ and   
$Df=(D_1f,...,D_df)$. 
When we talk about  ``derivatives up to order $m$" of a function  
for some nonnegative integer $m$, then we always include the zeroth-order derivative, 
i.e. the function itself. 
 Unless otherwise indicated at some places, 
the summation convention with respect to repeated integer 
valued indices is used throughout the paper. The constants in our estimates, usually denoted by $N$, may change from line to line in the calculations, but their dependencies will always be clear from the statements.

\section{Formulation and localization error estimate}

Consider the stochastic equation
\begin{equation}                                                \label{SPDE general}                                                             
du_t(x)=(L_tu_t(x)+f_t(x))\,dt+
\sum_{k=1}^{\infty}(M^k_tu_t(x)+g^k_t(x))\,dw^k_t  
\end{equation}
on $(t,x)\in[0,T]\times\bR^d=:H_T$, with initial condition
\begin{equation}					      \label{SPDE general-ini}
u_0(x)=\psi(x), \quad x\in\bR^d.     
\end{equation}
Here $f$ and $g=(g^k)_{k=1}^{\infty}$ are functions 
on 
$\Omega\times H_T$ with values in $\bR$ and $l_2$, 
respectively, 
and  
$L$ and $M^k$ are second order and first order differential 
operators of the form 
$$
L_t=a^{ij}_t(x)
D_iD_j+b^i_t(x)D_i+c_t(x),
\quad
M^k_t=\sigma^{ik}_t(x)D_i+\mu^k_t(x), \quad k=1,2,..., 
$$
where the coefficients $a^{ij}$, $b^i$, $c$, $\sigma^{ik}$ 
and $\mu^k$ are real-valued functions on $\Omega\times H_T$
for $i,j=1,2,...d$, and integers $k\geq1$. 

For an integer $m\geq0$, $p\in[2,\infty)$,  and $\vartheta\in\bR$  
the following assumptions ensure the existence and uniqueness 
of a $W^m_{p,\vartheta}$-valued solution $(u_t(\cdot))_{t\in[0,T]}$.

\begin{assumption}                                                         \label{assumption parabolicity}
For $P\otimes dt\otimes dx$-almost all $(\omega,t,x)\in\Omega\times[0,T]\times\bR^d$ 
$$
 \alpha^{ij}_t(x)z^iz^j\geq 0
$$
for all $z\in\bR^d$, where 
$$
\alpha^{ij}=2a^{ij}-\sigma^{ik}\sigma^{ jk}. 
$$
\end{assumption}

This is a standard assumption in the theory of stochastic 
PDEs. Below we assume some smoothness on $\alpha$ in $x$, and 
on all the coefficients and free terms of problem 
\eqref{SPDE general}-\eqref{SPDE general-ini}. We will also 
require that the nonnegative symmetric square root $\rho:=\sqrt{\alpha}$ 
possesses bounded second order derivatives in $x$. 
Concerning this assumption we remark that it is 
well-known from \cite{F} that $\rho$ is Lipschitz continuous  
in $x$ if $\alpha$ is bounded and has bounded second 
order derivatives, but it is also known that the second order 
derivatives of $\rho$ may not exist in the classical sense, 
even if $\alpha$ is smooth with bounded derivatives 
of arbitrary order. 

\begin{assumption}                     \label{assumption regularitycoeff}
(a) The derivatives in $x\in\bR^d$ of $a^{ij}$ up to order $\max(m,2)$ 
are $\mathcal P\otimes\mathcal B(\bR^d)$-measurable 
functions, bounded by $K$ for all $i,j\in\{1,2,...,d\}$. 

(b) The derivatives in $x\in\bR^d$ of $b^{i}$ and $c$ 
up to order $m$ are $\mathcal P\otimes\mathcal B(\bR^d)$-measurable 
functions, bounded by $K$ for all $i\in\{1,2,...,d\}$. 
The functions   $\sigma^{i}
=(\sigma^{ik})_{k=1}^{\infty}$ and $\mu=(\mu^{k})_{k=1}^{\infty}$ 
are $l_2$-valued and their derivatives in $x$ 
up to order $m+1$ are 
$\mathcal P\otimes\mathcal B(\bR^d)$-measurable 
$l_2$-valued functions, bounded by $K$.

(c) The derivatives in $x\in\bR^d$ of $\rho=\sqrt{\alpha}$ up to order $m+1$ 
are $\mathcal P\otimes\mathcal B(\bR^d)$-measurable 
functions, bounded by $K$.
\end{assumption}

\begin{assumption}                       \label{assumption regularitydata}
The initial value, 
$\psi$ is an $\cF_0$-measurable random variable 
with values in $W^m_{p,\vartheta}$. 
The free data, $ f_t $ and $ g_t =(g^k)_{k=1}^{\infty}$ are 
predictable 
processes with values in $W^m_{p,\vartheta}$ and 
$W^{m+1}_{p,\vartheta}(l_2) $, respectively, 
such that almost surely
\begin{equation}			\label{jan22}	
\mathcal K_{m,p,\vartheta}^p(T):=|\psi|_{W^m_{p,\vartheta}}^p+\int_0^T
\big(|f_t|^p_{W^m_{p,\vartheta}}+|g_t|^p_{W^{m+1}_{p,\vartheta} }\big)\,dt<\infty.
\end{equation}
\end{assumption}

\begin{definition}                                                               \label{definition solution}
A $W_{p,\vartheta}^1$-valued function $u$,
defined on  $[0,T]\times\Omega$, 
is called a solution 
of 
\eqref{SPDE general}-\eqref{SPDE general-ini}
on $[0,T]$ if $u$ is predictable on $[0,T]\times\Omega$, 
$$
\int_0^{T}|u_t|_{W^1_{p,\vartheta}}^p\,dt<\infty\,(a.s.), 
$$
and for each $\varphi\in C_0^{\infty}(\bR^d)$ 
for almost all $\omega\in\Omega$
\begin{align}
(u_t,\varphi)=&(\psi,\varphi)
+\int_0^t\{-(a^{ij}_sD_iu_s,D_j\varphi)
+(\bar b^{i}_sD_iu_s+c_su_s+f_s,\varphi)\}\,ds     \nonumber\\
&
+\int_0^t(\sigma^{ir}_sD_iu_s
+\mu^{r}_{s}u_s+g^r_s,\varphi)\,dw^r_{s}                \label{solution}
\end{align}
for all $t\in[0,T]$,   where $\bar
b^i=b^{i}-D_ja^{ij}$, and $(\cdot\,,\cdot)$ denotes the inner
product in the Hilbert space of square integrable real-valued
functions on $\bR^d$.  
\end{definition}
 
The following theorem follows from Theorem 2.1 in \cite{GGK}. 
\begin{theorem}                                                                      \label{theorem ex}
Let Assumptions \ref{assumption parabolicity}, 
\ref{assumption regularitycoeff} (a)-(b), 
and \ref{assumption regularitydata} 
with $m\geq0$ hold. Then there exists at most one solution 
 on $[0,T]$. 
If Assumptions \ref{assumption parabolicity}, 
\ref{assumption regularitycoeff}(a), 
and \ref{assumption regularitydata} 
hold with $m\geq1$, then 
there exists a unique solution $u=(u_t)_{t\in[0,T]}$ on $[0,T]$.  
Moreover, $u$ is a  $W^{m}_{p,\vartheta}$-valued 
weakly continuous process,  
it is a strongly continuous 
process with values in $W^{m-1}_{p,\vartheta}$, and for every $q>0$ 
and $n\in\{0,1,...,m\}$
\begin{equation}                                                                      \label{estimate}
\E\sup_{t\in[0,T]}|u_t|_{W^n_{p,\vartheta}}^q
\leq N\E\mathcal K^q_{n,p,\vartheta}(T), 
\end{equation}
where $N$ is a constant depending only on $K$, $T$, $d$, $m$, $p$, $\vartheta$, and $q$.  
\end{theorem}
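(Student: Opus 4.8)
The plan is to obtain the whole statement from Theorem 2.1 in \cite{GGK}, which sets up the $L_p$-theory of possibly degenerate equations of the form \eqref{SPDE general}--\eqref{SPDE general-ini} in the weighted Sobolev scale $W^m_{p,\vartheta}$, and then to upgrade the moment bound it provides to every exponent $q>0$. First I would verify that Assumptions \ref{assumption parabolicity}--\ref{assumption regularitydata} furnish exactly the input required by that theorem: the parabolicity of $\alpha$, the boundedness of the $x$-derivatives of $a^{ij}$ up to order $\max(m,2)$, of $b^i,c$ up to order $m$, of $\sigma,\mu$ up to order $m+1$, and of the symmetric square root $\rho=\sqrt{\alpha}$ up to order $m+1$, together with the data bound \eqref{jan22}. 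Uniqueness needs only the regularity in Assumption \ref{assumption regularitycoeff}(a)--(b) and $m\ge0$; for existence one takes $m\ge1$, and here the boundedness of the derivatives of $\rho$ in part (c) is the decisive ingredient, since it is what allows the fully degenerate second-order part to be reduced to a zero-order equation by the method of characteristics on which \cite{GGK} rests. (Should that theorem be formulated only in unweighted spaces, I would first conjugate by the weight, i.e. pass to $v_t:=(1+|x|^2)^{\vartheta/2}u_t$; as the weight has bounded logarithmic derivatives of all orders, the transformed operators keep the same structure, parabolicity and coefficient bounds.)

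This already delivers existence, uniqueness, weak continuity in $W^m_{p,\vartheta}$, strong continuity in $W^{m-1}_{p,\vartheta}$, and the estimate \eqref{estimate} for the single exponent $q=p$ and each $n\le m$ (the solution being automatically a $W^n_{p,\vartheta}$-valued solution of the same equation with data of the corresponding regularity).

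The step I expect to be the real obstacle is passing from $q=p$ to an arbitrary $q>0$ with the correct power of $\mathcal K_{n,p,\vartheta}$ on the right-hand side, since a crude use of Jensen's inequality would only give $(\E\mathcal K^p_{n,p,\vartheta})^{q/p}$ instead of $\E\mathcal K^q_{n,p,\vartheta}$. To handle it I would note that the base estimate holds at every stopping time $\tau\le T$: restricting the free terms to $[0,\tau]$ and invoking uniqueness yields, with $X_t:=|u_t|^p_{W^n_{p,\vartheta}}$,
\begin{equation*}
\E X_\tau\le N\,\E A^{(n)}_\tau,\qquad
A^{(n)}_t:=|\psi|^p_{W^n_{p,\vartheta}}+\int_0^t\big(|f_s|^p_{W^n_{p,\vartheta}}+|g_s|^p_{W^{n+1}_{p,\vartheta}}\big)\,ds,
\end{equation*}
where $A^{(n)}$ is continuous, nondecreasing and $A^{(n)}_T=\mathcal K^p_{n,p,\vartheta}(T)$. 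Lenglart's domination inequality then gives $\E\big(\sup_{t\le T}X_t\big)^{\beta}\le N\,\E (A^{(n)}_T)^{\beta}$ for $0<\beta<1$, which is precisely \eqref{estimate} for $0<q<p$; for $q\ge p$ the same bound follows from the Burkholder--Davis--Gundy and Doob inequalities applied to the stochastic integral term and a Gronwall argument in the $q$-th moments. Keeping track of the constants in each of these steps shows that $N$ depends only on $K,T,d,m,p,\vartheta$ and $q$, and not on the data, completing the proof.
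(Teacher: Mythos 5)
Your route is essentially the paper's: the statement is proved there by direct citation of Theorem 2.1 in \cite{GGK} for $\vartheta=0$, with the general weight handled exactly by the conjugation $\tilde u=(1+|x|^2)^{-\vartheta/2}u$ you describe in your parenthetical (this is the paper's Remark \ref{remark poly}). Two corrections to your verification step, though. First, you assert that Assumption \ref{assumption regularitycoeff}(c) --- bounded derivatives of $\rho=\sqrt{\alpha}$ --- is ``the decisive ingredient'' for existence, on the grounds that \cite{GGK} rests on the method of characteristics. This is wrong on both counts: the theorem as stated does not assume (c) at all, and Theorem 2.1 of \cite{GGK} does not require it --- its solvability theory is built on a priori estimates in Sobolev spaces and approximation of the degenerate equation, not on characteristics. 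The square-root regularity and the characteristics method enter only later in the present paper, as the mechanism behind the localization estimate of Theorem \ref{theorem cutoff}. If your proof genuinely needed (c), you would be proving a weaker theorem than the one stated, so it matters that the citation goes through without it.

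Second, the obstacle you anticipate --- upgrading from $q=p$ to arbitrary $q>0$ --- does not arise: Theorem 2.1 of \cite{GGK} already delivers the supremum moment bound \eqref{estimate} for every $q>0$ with $\E\mathcal K^q_{n,p,\vartheta}(T)$ on the right, so the whole Lenglart/BDG superstructure is unnecessary here. That said, the Lenglart-type domination you invoke for $0<q<p$ is a legitimate device of exactly the kind the paper itself uses elsewhere (Lemma 3.2 of \cite{GK2003}, in the proofs of Lemma \ref{lemma flow exponential} and of Theorem \ref{theorem cutoff} for $q\in(0,1]$), and it is in fact close to how such bounds are obtained inside \cite{GGK}; your sketch for $q\geq p$ via Doob/BDG plus Gronwall is plausible but would need care with the infinite sum of stochastic integrals and is, in any case, not needed. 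Net assessment: same approach as the paper, correct in outline, with one misattributed hypothesis and one superfluous step.
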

\begin{remark}                                                                   \label{remark poly}
Theorem 2.1 in \cite{GGK} covers the $\vartheta=0$ case. 
We can reduce the case of 
general $\vartheta$ to this by  rewriting the equation for $u$ as an equation 
for $\tilde{u}(x)=u(x)(1+|x|^2)^{-\vartheta/2}$ (see e.g. \cite{GG}). It is easily seen that the coefficients of the resulting equation still satisfy the conditions of the theorem.
\end{remark}

\begin{definition}
A $\mathcal P\otimes\mathcal B(\bR^d)$-measurable random field 
$u$ on $H_T$ is called a classical solution 
of \eqref{SPDE general}-\eqref{SPDE general-ini}, 
if  along with its derivatives in $x$ up to order 2 it is 
continuous in $(t,x)\in H_T$, 
it satisfies 
\eqref{SPDE general}-\eqref{SPDE general-ini}  
almost surely for all $(t,x)\in H_T$,  
and there exists a finite random variable $\xi$ and a constant $s$ 
such that almost surely
$$
|D^{\alpha}u_t(x)|\leq \xi(1+|x|)^{s} 
\quad
\text{for all $(t,x)\in H_T$ and for $|\alpha|\leq2$.}
$$ 
\end{definition}

\begin{corollary}                                          \label{corollary ex}
Let assumptions of Theorem \ref{theorem ex} hold 
with 
$m>2+d/p$. 
Then there exists a unique classical solution $u$ 
to \eqref{SPDE general}-\eqref{SPDE general-ini}. 
\end{corollary}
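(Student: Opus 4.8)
The plan is to produce the classical solution from the $W^m_{p,\vartheta}$-valued solution furnished by Theorem \ref{theorem ex} by means of Sobolev embedding, and to recover uniqueness by reducing to the uniqueness part of that same theorem. First I would apply Theorem \ref{theorem ex} with the given $m>2+d/p$ (so in particular $m\geq1$) to obtain the unique solution $u$, which is weakly continuous with values in $W^m_{p,\vartheta}$, strongly continuous with values in $W^{m-1}_{p,\vartheta}$, and, by \eqref{estimate}, satisfies $\sup_{t\in[0,T]}|u_t|_{W^m_{p,\vartheta}}<\infty$ almost surely. The first key point is a weighted embedding: since the weight $(1+|x|^2)^{\vartheta/2}$ is comparable to its value at the centre on any unit ball and has derivatives controlled by itself, on each ball $B(x_0,1)$ one has $|f|_{W^m_p(B(x_0,1))}\leq N(1+|x_0|^2)^{-\vartheta/2}|f|_{W^m_{p,\vartheta}}$ with $N$ independent of $x_0$. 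Combined with the translation-invariant local embedding $W^m_p(B(x_0,1))\hookrightarrow C^2(\overline{B(x_0,1/2)})$, valid because $m>2+d/p$, this yields $|D^\alpha f(x_0)|\leq N(1+|x_0|^2)^{-\vartheta/2}|f|_{W^m_{p,\vartheta}}$ for $|\alpha|\leq2$. Applied to $u_t$ this selects a representative that is $C^2$ in $x$ and provides the required growth bound, with $\xi=N\sup_t|u_t|_{W^m_{p,\vartheta}}$ finite almost surely and $s=(-\vartheta)^{+}$.

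Next I would establish joint continuity of $u$, $Du$ and $D^2u$ in $(t,x)$. Strong continuity of $t\mapsto u_t$ in $W^{m-1}_{p,\vartheta}$ already gives, via the embedding of the previous step with $m-1>1+d/p$, continuity of $u$ and $Du$ on $H_T$. For the second derivatives I would exploit that the orbit $\{u_t:t\in[0,T]\}$ is bounded in $W^m_{p,\vartheta}$, hence, by the compact embedding of $W^m_p$ into $C^2$ on bounded domains (again using $m>2+d/p$), relatively compact in $C^2$ on each ball. Any sequence $t_n\to s$ then has a $C^2$-convergent subsequence; since it already converges to $u_s$ in $C^1$ and $u_s\in C^2$, the $C^2$-limit must coincide with $u_s$, so the whole sequence converges to $u_s$ in $C^2$. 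This upgrades the $C^1$-continuity in time to $C^2$-continuity, whence $D^2u$ is continuous in $(t,x)$.

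To pass from the variational identity \eqref{solution} to the pointwise equation, I would first use the $C^2$-regularity and the boundedness of the coefficients to integrate by parts in the second-order term, rewriting \eqref{solution} as $(u_t,\varphi)=(\psi,\varphi)+\int_0^t(L_su_s+f_s,\varphi)\,ds+\int_0^t(M^r_su_s+g^r_s,\varphi)\,dw^r_s$ for all $\varphi\in C_0^\infty(\bR^d)$; here the combination $\bar b^i=b^i-D_ja^{ij}$ is precisely what restores $L_s$. Taking $\varphi=\varphi_n(\cdot-x_0)$ for a mollifier $\varphi_n$ and letting $n\to\infty$, the deterministic terms converge by continuity of the integrands together with dominated convergence built on the growth bounds, while the stochastic term converges in probability by the It\^o isometry, since the $l_2$-valued integrands $M^r_\cdot u_\cdot+g^r_\cdot$ are continuous in $x$ and their mollifications converge in $L_2(\Omega\times[0,t];l_2)$. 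This gives the classical equation for each fixed $(t,x_0)$ almost surely, and the joint continuity established above then promotes it to an identity holding simultaneously for all $(t,x)\in H_T$ on a single almost sure event.

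Finally, for uniqueness I would reduce again to Theorem \ref{theorem ex}. A classical solution $\tilde u$ obeys \eqref{solution} for every $\varphi\in C_0^\infty(\bR^d)$, and its polynomial growth of order $s$ places it in $W^1_{p,\vartheta'}$ for any weight $\vartheta'\leq\vartheta$ with $(\vartheta'+s)p<-d$; since $W^m_{p,\vartheta}\subset W^m_{p,\vartheta'}$, the data $\psi,f,g$ still satisfy Assumption \ref{assumption regularitydata} with weight $\vartheta'$. Thus $\tilde u$ is a solution in the sense of Definition \ref{definition solution} for the $\vartheta'$-problem, and the uniqueness part of Theorem \ref{theorem ex} forces $\tilde u=u$. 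I expect the main obstacle to be the weak-to-classical passage: managing the exceptional null sets so that the pointwise equation holds on one almost sure event valid for all $(t,x)$, and in particular justifying the limit of the stochastic integrals under mollification uniformly enough. The upgrade to $C^2$-continuity in time, which uses only strong continuity in $W^{m-1}_{p,\vartheta}$ rather than in $W^m_{p,\vartheta}$, is the other delicate point, resolved by the compactness argument above.
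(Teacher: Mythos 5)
Your proposal is correct and follows precisely the route the paper intends: Corollary \ref{corollary ex} is stated without proof as a direct consequence of Theorem \ref{theorem ex} via the weighted Sobolev embedding $W^m_{p,\vartheta}\hookrightarrow C^2_{\mathrm{loc}}$ for $m>2+d/p$ (with polynomial growth bound $s=(-\vartheta)^{+}$), which is exactly your construction. The additional details you supply---the compactness argument upgrading strong $W^{m-1}_{p,\vartheta}$-continuity in time to $C^2$-continuity, the mollification passage from the variational identity \eqref{solution} to the pointwise equation (where, as you note, one should localize by stopping times to justify the stochastic-integral limits), and the reduction of uniqueness of classical solutions to the $m\geq0$ uniqueness statement of Theorem \ref{theorem ex} after passing to a more negative weight $\vartheta'$ with $(\vartheta'+s)p<-d$---are all sound fillings of steps the paper leaves implicit.
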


Let us refer to the problem 
\eqref{SPDE general}-\eqref{SPDE general-ini} 
as Eq($\frD$), where $\frD$ stands for the ``data"
$$
\frD=(\psi, a, b, c,\sigma,\mu, f,g)
$$
with $a=(a^{ij})$, $b=(b^i)$, $\sigma=(\sigma^{ki})$, $g=(g^k)$ 
and $\mu=(\mu^{k})$. 
We are interested in the error when instead of 
Eq($\frD$) we solve Eq($\bar\frD$) 
with 
$$
\bar\frD
=(\bar\psi, \bar a, \bar b, 
\bar c,\bar\sigma,\bar\mu, \bar f,\bar g).
$$

\begin{assumption}                             \label{assumption R}
Almost surely 
\begin{equation}
\frD=\bar\frD\quad \text{on $[0,T]\times\{x\in\R^d:|x|\leq R\}$}. 
\end{equation}
\end{assumption}

The main example to keep in mind is 
when each component of $\bar\frD$ is 
a truncation of the corresponding component of $\frD$ 
(see Section \ref{Section-findiff} below). 
Let 
$$
B_R=\{x\in\R^d:|x|\leq R\}
$$
 for $R>0$. 
Define $\bar\cK_{m,p,\vartheta}^p(T)$ as 
$\mathcal K_{m,p,\vartheta}^p(T)$ with $\bar \psi$, $\bar f$ and $\bar g$ 
in place of $\psi$, $f$ and $g$, respectively.
The main result reads as follows.
\begin{theorem}                                   \label{theorem cutoff}
Let $\nu\in(0,1)$ and let Assumptions 
\ref{assumption parabolicity},
\ref{assumption regularitycoeff} (b)-(c)
and 
\ref{assumption regularitydata} hold 
with $m>2+d/p$ and $\vartheta\in\R$ for $\frD$ and $\bar \frD$. Let also Assumption
\ref{assumption R}
hold. Then 
Eq($\frD$) and Eq($\bar\frD$) have a unique classical 
solution $u$ and $\bar u$, respectively, and for $q>0$, $r>1$
\begin{equation}                                             \label{main estimate}
\E\sup_{t\in[0,T]}\sup_{x\in B_{\nu R}}|u_t(x)-\bar u_t(x)|^q
\leq 
Ne^{-\delta R^2}
\E^{1/r}
(\cK_{m,p,\vartheta}^{qr}(T)+\bar\cK_{m,p,\vartheta}^{qr}(T)), 
\end{equation}
where $N$ and $\delta$ are positive constants, depending on 
$K$, $d$, $T$,  $q$,  $r$, $\vartheta$,  $p$, and $\nu$. 
\end{theorem}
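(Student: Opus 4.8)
The plan is to reduce the estimate to a probabilistic representation of the difference $v:=u-\bar u$ via the method of characteristics, and then to exploit the finite, Gaussian, speed of propagation of the characteristics.

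\emph{Reduction.} First note that $a=\tfrac12(\rho\rho^{*}+\sigma\sigma^{*})$, so Assumption \ref{assumption regularitycoeff}(a) is a consequence of (b)-(c); hence Corollary \ref{corollary ex} applies to both Eq($\frD$) and Eq($\bar\frD$) and produces unique classical solutions $u$ and $\bar u$. Writing $L_tu_t-\bar L_t\bar u_t=L_tv_t+(L_t-\bar L_t)\bar u_t$ and similarly for $M^k$, the difference $v$ solves
\begin{equation*}
dv_t=(L_tv_t+F_t)\,dt+\sum_k(M^k_tv_t+G^k_t)\,dw^k_t,\qquad v_0=\psi-\bar\psi,
\end{equation*}
with $F_t=(L_t-\bar L_t)\bar u_t+(f_t-\bar f_t)$ and $G^k_t=(M^k_t-\bar M^k_t)\bar u_t+(g^k_t-\bar g^k_t)$. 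Since differential operators act pointwise and, by Assumption \ref{assumption R}, all coefficients and free terms coincide on $B_R$, the data $v_0$, $F_t$ and $G^k_t$ all vanish on $B_R$. Thus the only source driving $v$ is supported in $\{|x|>R\}$, and the crux is that it cannot reach $B_{\nu R}$ except with exponentially small weight.

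\emph{Characteristic representation.} Following \cite{Kun}, \cite{KR}, \cite{LM}, I would enlarge the probability space by an auxiliary $d$-dimensional Wiener process $B$, independent of $(w^k)$, and let $\E_B$ denote expectation in $B$ alone. Using $\rho=\sqrt{\alpha}$, whose regularity is supplied by Assumption \ref{assumption regularitycoeff}(c), I complete the noise: since $\sigma\sigma^{*}+\rho\rho^{*}=2a$, the second order operator in $L$ is exactly the It\^o correction of the combined driving noise $\sigma\,dw+\rho\,dB$, and with respect to this enlarged noise the equation for $v$ becomes \emph{fully degenerate} (purely first order). One then has $v=\E_B\hat v$, where $\hat v$ solves the fully degenerate equation and admits the explicit transport representation
\begin{equation*}
\hat v_t(x)=\Gamma^{t,x}_0\,v_0(\eta^{t,x}_0)+\int_0^t\Gamma^{t,x}_sF_s(\eta^{t,x}_s)\,ds+\int_0^t\Gamma^{t,x}_sG^k_s(\eta^{t,x}_s)\,dw^k_s,
\end{equation*}
where $\eta^{t,x}_s$ is the backward stochastic flow with $\eta^{t,x}_t=x$, driven by $\sigma\,dw+\rho\,dB$, and $\Gamma$ is the multiplicative weight built from the zeroth order coefficients $c$ and $\mu$. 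The diffusion coefficients of $\eta$ are $\sigma$ and $\rho$, both bounded by $K$, and its drift is bounded as well.

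\emph{Gaussian tail and combination.} Since $v_0$, $F_s$ and $G^k_s$ vanish on $B_R$, in every term above the data is evaluated only on $\{\eta^{t,x}_s\notin B_R\}\subseteq A$, where $A:=\{\sup_{s\le t}|\eta^{t,x}_s|>R\}$. For $x\in B_{\nu R}$ we have $A\subseteq\{\sup_{s\le t}|\eta^{t,x}_s-x|>(1-\nu)R\}$, and as the coefficients of $\eta$ are bounded by $K$, a standard exponential (Bernstein-type) martingale estimate gives $P(A)\le Ne^{-\delta R^2}$ with $\delta$ of order $(1-\nu)^2/T$. I would then bound moments of the representation by H\"older's inequality with exponents $r$ and $r/(r-1)$ (and the Burkholder-Davis-Gundy inequality for the stochastic integral term): the factor carrying the exit event contributes $P(A)^{(r-1)/r}\le Ne^{-\delta R^2}$ after adjusting $\delta$, while the complementary factor is an $L^{qr}$-moment of $\Gamma$, of $\eta$, and of the data along $\eta$. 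The weights $\Gamma$ have all moments finite because $c,\mu$ are bounded, and $F,G$ involve derivatives of $\bar u$ up to order two, which -- through the Sobolev embedding $W^m_{p,\vartheta}\hookrightarrow C^2$ available for $m>2+d/p$ -- are controlled by $\cK_{m,p,\vartheta}$ and $\bar\cK_{m,p,\vartheta}$ via the a priori bound \eqref{estimate} of Theorem \ref{theorem ex}. This yields the estimate for $\E|v_t(x)|^q$ uniformly in $(t,x)$; passing to $\E\sup_{t,x}$ uses the $(t,x)$-regularity of the classical solutions and a Kolmogorov/Sobolev argument, at the cost of an arbitrarily small decrease of $\delta$.

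\emph{Main obstacle.} The genuinely delicate point is the rigorous construction and control of the characteristic representation in the degenerate regime: justifying $v=\E_B\hat v$ and the transport formula for $\hat v$ (including the stochastic-integral source), and securing moment bounds -- uniform in $(t,x)$ -- for the flow $\eta$, its spatial derivatives, and the weight $\Gamma$. This is precisely where Assumption \ref{assumption regularitycoeff}(c) on $\rho$ and the threshold $m>2+d/p$ are indispensable. Once the representation and the boundedness of the characteristic coefficients are established, the Gaussian exit estimate and the H\"older splitting are routine.
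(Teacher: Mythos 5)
Your strategy is viable and shares the paper's core machinery---completing the noise with $\rho\,d\hat w$ to make the equation fully degenerate, the method of characteristics, Gaussian tails for the flow, and a H\"older splitting---but your decomposition is genuinely different. The paper never forms the difference equation: it represents $u$ and $\bar u$ separately via $u_t(x)=\E(v_t(x)\mid\cF_t)$ with $v_t(x)=U_t(Y_t^{-1}(x))$, and observes that on the complement of the event $H$ (forward flows confined to $B_R$, inverse flows to $B_{\nu'R}$) the two characteristic systems, and hence $v$ and $\bar v$, literally coincide on $\bB_{\nu R}$; the error is then $(P(H))^{(r-1)/r}$ times moments of $v,\bar v$, via Doob, H\"older and conditional Jensen. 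You subtract first, so the difference solves a single SPDE whose data $v_0$, $F$, $G$ vanish on $B_R$, and the exponential smallness comes from the characteristics having to exit $B_R$ before seeing any data. Your version buys a cleaner ``finite speed of propagation'' statement and avoids comparing two flows; the paper's version buys that it only ever represents intact solutions, with data of full regularity $W^m_{p,\vartheta}$, whereas your sources $F$ and $G$ contain two (resp.\ one) derivatives of $\bar u$ and so lie only in $W^{m-2}_{p,\vartheta}$ (resp.\ $W^{m-1}_{p,\vartheta}$), below the classical-solution threshold $m>2+d/p$. This is repairable---define $\hat v$ as the difference of the two \emph{augmented} classical solutions, noting that since $\rho=\bar\rho$ on $B_R$ the extra source $(\cN^r-\bar\cN^r)\hat{\bar u}\,d\hat w^r$ also vanishes on $B_R$---but it must be said.

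Two further points are papered over. First, your transport formula is not literally an It\^o integral: for fixed $t$ the backward characteristic $\eta^{t,x}_s$ depends on the noise on $[s,t]$, so the integrand of $\int_0^t\Gamma^{t,x}_sG^k_s(\eta^{t,x}_s)\,dw^k_s$ is anticipating. The correct formulation (and the paper's) is to solve the adapted zero-order SDE for $U_t(y)=\hat v_t(Y_t(y))$ along the forward flow and only afterwards substitute $y=Y_t^{-1}(x)$; you flag this as the main obstacle, correctly. Second, and relatedly, your exit event concerns $\eta^{t,x}_s=Y_s(Y_t^{-1}(x))$, which is \emph{not} a forward diffusion started at $x$, so ``a standard Bernstein-type martingale estimate'' does not apply verbatim: one must control the inverse flow. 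The paper does exactly this in estimating $J_1$, via a dyadic-annuli decomposition (if $|Y_t^{-1}(x)|$ is large, the forward flow must have travelled a distance comparable to its starting radius) combined with Lemma \ref{lemma flow sup}, whose uniformity in $(t,s,x)$---obtained through the Kolmogorov-type Lemma \ref{thm GL} at the cost of the polynomial factor $R^{d+1/2}$, absorbed by decreasing $\delta$---is also what legitimizes your final passage to $\E\sup_{t,x}$. Alternatively one may invoke Kunita's backward SDE for the inverse flow, whose coefficients obey the same bounds, but some such argument is indispensable. With these repairs your proof closes and yields the stated estimate.
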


First we collect some auxiliary results. 
The following lemma is a version of Kolmogorov's 
continuity criterion, see Theorem 3.4 of \cite{GL}. 

\begin{lemma}											\label{thm GL}
Let $x(\theta)$ be a stochastic process parametrized by 
and continuous in $\theta\in D\subset\R^p$, where $D$ is a 
direct product of lower dimensional closed balls. 
Then for all $0<\alpha<1$, $q\geq1$, and $s>p/\alpha$, 
$$\E\sup_{\theta}|x(\theta)|^q\leq N(1+|D|)\left[\sup_{\theta}(\E|x(\theta)|^{qs})^{1/s}
+\sup_{\theta\neq\theta'}\left(
\frac{\E|x(\theta)-x(\theta')|^{qs}}{|\theta-\theta'|^{qs\alpha}}\right)^{1/s}\right]$$
where $N=N(q,s,\alpha,p)$, 
and $|D|$ is the volume of $D$.
\end{lemma}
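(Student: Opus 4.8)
The plan is to prove this as a Kolmogorov--Chentsov type estimate by dyadic chaining, controlling the $L^q$-norm of the supremum through the $L^{qs}$-moments appearing on the right-hand side. Write $A=\sup_\theta(\E|x(\theta)|^{qs})^{1/s}$ and $B=\sup_{\theta\neq\theta'}(\E|x(\theta)-x(\theta')|^{qs}/|\theta-\theta'|^{qs\alpha})^{1/s}$ for the two terms on the right. Since $D$ is a product of closed balls it is convex and bounded, so fixing a base point $\theta_0\in D$ I may split $\sup_\theta|x(\theta)|\leq|x(\theta_0)|+\sup_\theta|x(\theta)-x(\theta_0)|$. Because $P$ is a probability measure and $qs\geq q$, the elementary inequality $\|\cdot\|_{L^q}\leq\|\cdot\|_{L^{qs}}$ gives $\E|x(\theta_0)|^q\leq(\E|x(\theta_0)|^{qs})^{1/s}\leq A$ and reduces the whole statement to an oscillation bound of the form $\E\sup_\theta|x(\theta)-x(\theta_0)|^q\leq N(1+|D|)B$.

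For the oscillation I would chain along dyadic approximations. Let $D_n$ be the points of $D$ whose coordinates are integer multiples of $2^{-n}$, and set $K_n=\max\{|x(\eta)-x(\eta')|:\eta,\eta'\in D_n,\ |\eta-\eta'|\leq C2^{-n}\}$, where $C=C(p)$ is chosen so that convexity of $D$ lets every $\theta\in D$ be reached from $\theta_0$ by a sequence of dyadic points $\theta_n\in D_n$ with $|\theta_n-\theta_{n+1}|\leq C2^{-n}$. Continuity of $x$ in $\theta$ then yields the telescoping bound $\sup_\theta|x(\theta)-x(\theta_0)|\leq\sum_{n\geq0}K_n$. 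Estimating the moment of each level, $\E K_n^{qs}\leq\sum_{\text{pairs}}\E|x(\eta)-x(\eta')|^{qs}$; the hypothesis bounds each summand by $B^s(C2^{-n})^{qs\alpha}$, while the number of admissible pairs at level $n$ is at most $C(p)(1+|D|)2^{np}$ (each of the $\leq C(1+|D|2^{np})$ grid points in $D$ having a bounded number of neighbours within distance $C2^{-n}$). Hence $(\E K_n^{qs})^{1/(qs)}\leq[C(1+|D|)]^{1/(qs)}B^{1/q}2^{n(p-qs\alpha)/(qs)}$.

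Summing in $n$ and using Minkowski's inequality in $L^{qs}$,
\[
\big(\E\sup_\theta|x(\theta)-x(\theta_0)|^{qs}\big)^{1/(qs)}\leq\sum_{n\geq0}(\E K_n^{qs})^{1/(qs)}\leq[C(1+|D|)]^{1/(qs)}B^{1/q}\sum_{n\geq0}2^{n(p-qs\alpha)/(qs)}.
\]
The geometric series converges precisely because $s>p/\alpha$: this gives $qs\alpha>qp\geq p$ (using $q\geq1$), so the exponent $(p-qs\alpha)/(qs)$ is strictly negative. Raising to the power $q$, using $\|\cdot\|_{L^q}\leq\|\cdot\|_{L^{qs}}$ once more, and noting $(1+|D|)^{1/s}\leq1+|D|$ (valid since $s>1$), I obtain $\E\sup_\theta|x(\theta)-x(\theta_0)|^q\leq N(1+|D|)B$ with $N=N(q,s,\alpha,p)$, and combining with the base-point term finishes the proof.

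I expect the main obstacle to be obtaining the constant $N$ uniformly in $D$ together with the exact volume factor $1+|D|$. This is exactly where the hypothesis that $D$ is a direct product of balls is used: convexity makes the dyadic connecting paths stay inside $D$ and makes intrinsic distances comparable to Euclidean ones, and the regular boundary lets one bound the number of dyadic grid points at level $n$ by $C(1+|D|2^{np})$ uniformly in the shape of $D$, which is what produces the clean $(1+|D|)$ prefactor rather than a shape-dependent constant. An alternative, essentially equivalent route is to apply the Garsia--Rodemich--Rumsey inequality with $\Psi(u)=|u|^{qs}$ and gauge $u^{\alpha}$; the integrability of the resulting double integral over $D\times D$ again forces the condition $s>p/\alpha$ and produces the same volume dependence.
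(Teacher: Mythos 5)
Your overall architecture (chaining at the level of $qs$-th moments, then descending to $q$-th moments via $\|\cdot\|_{L^q}\le\|\cdot\|_{L^{qs}}$, with the geometric series converging exactly because $s>p/\alpha$) is sound, and for what it is worth the paper itself offers no proof to compare against — it cites the lemma as Theorem 3.4 of \cite{GL}. But your proof has a genuine gap at its central step: you chain every $\theta\in D$ back to a \emph{single fixed} base point $\theta_0$ through points $\theta_n\in D_n$ with $|\theta_n-\theta_{n+1}|\le C2^{-n}$. Any such chain travels a total distance at most $\sum_{n\ge0}C2^{-n}=2C(p)$, so no point at distance greater than $2C(p)$ from $\theta_0$ can be reached; convexity does not help, and the telescoping bound $\sup_\theta|x(\theta)-x(\theta_0)|\le\sum_n K_n$ is simply false once $\mathrm{diam}(D)$ is large — which is precisely the regime the paper needs, since the lemma is applied with $D\supset B_R$ and $R\to\infty$ in Lemma \ref{lemma flow sup}. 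The failure is not repairable within your reduction, because the reduced claim $\E\sup_\theta|x(\theta)-x(\theta_0)|^q\le N(1+|D|)B$ is itself false: take the deterministic process $x(\theta)=\theta$ on $D=[0,R]$, $p=1$. Then $B=\sup_{t\ne t'}|t-t'|^{q(1-\alpha)}=R^{q(1-\alpha)}$ while $\sup_\theta|x(\theta)-x(0)|^q=R^q$, so for $q\alpha>1$ no constant $N(q,s,\alpha,p)$ works. The moral is that over a large domain the oscillation cannot be controlled by the H\"older quotient alone: the term $A$, which you discard after handling the base point, must remain in play at the coarse scale. (The same objection hits your closing Garsia--Rodemich--Rumsey remark if applied globally on $D$: it too produces diameter powers, not the stated $(1+|D|)$ factor.)

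The standard repair keeps your calculations but changes the skeleton: chain each $\theta$ only to \emph{its own} nearest level-$n$ grid points, so all chains are local, giving $\sup_\theta|x(\theta)|\le\max_{\eta\in D_0}|x(\eta)|+\sum_{n\ge0}K_n$, and control the new coarse term by a union bound with the $A$-term, $\E\max_{\eta\in D_0}|x(\eta)|^{qs}\le \#D_0\cdot A^s\le C(1+|D|)A^s$. Equivalently: cover $D$ by $M\le C(p)(1+|D|)$ unit cells $Q_j$, run your (correct) local chaining to get $\E\sup_{Q_j}|x|^{qs}\le C(A^s+B^s)$ on each cell, and then pass to $q$-th moments via $\E\max_j Y_j\le\sum_j\E Y_j\le\sum_j(\E Y_j^{s})^{1/s}\le MC(A+B)$ with $Y_j=\sup_{Q_j}|x|^q$ — this interplay between $qs$-moment chaining and $q$-moment union bound is exactly what yields the volume factor to the first power. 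One further caveat, which your grid-count estimate $\#D_n\le C(1+|D|2^{np})$ quietly assumes: this count (and hence the clean $(1+|D|)$ prefactor) requires the radii of the factor balls to be bounded below; for products with very thin factors the correct count is governed by $\prod_i(1+|D_i|)$ rather than $1+|D|$. In the paper's application this is harmless, as the time-factors have fixed length $T$ and only the spatial ball $B_R$ grows, but it is a point any complete proof must address. Your remaining steps — splitting off $|x(\theta_0)|$, the Lyapunov inequality, Minkowski in $L^{qs}$, and the exponent bookkeeping — are correct and slot unchanged into this corrected skeleton.
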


\begin{lemma}                                     \label{lemma flow exponential}
Let $(\alpha_t)_{t\in[0,T]}$ 
and $(\beta_t)_{t\in[0,T]}$ 
be $\cF_t$-adapted processes with values in $\bR^d$ 
and $l_2(\bR^d)$, respectively, 
in magnitude bounded by a constant $K$. 
Then for the process
\begin{equation}                                        \label{070414}	
X_t=\int_0^t\alpha_s\,ds
+\int_0^t\beta^k_s\,dw^k_s,\quad t\in[0,T]
\end{equation}
there exist constants  
$\varepsilon=\varepsilon(K,T)>0$ 
and a $N=N(K,T)$ such that 
$$
\E\sup_{t\leq T} e^{\varepsilon|{X}_{t}|^2}\leq N. 
$$
\end{lemma}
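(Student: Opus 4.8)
The plan is to split $X_t = A_t + M_t$ into its drift $A_t = \int_0^t \alpha_s\,ds$ and its martingale part $M_t = \int_0^t \beta^k_s\,dw^k_s$, to handle the two separately, and to reduce the whole claim to a Gaussian tail bound for $S := \sup_{t\le T}|M_t|$. Since $|\alpha_s|\le K$, the drift is bounded deterministically, $|A_t|\le KT$ for all $t\le T$, so that $|X_t|^2 \le 2K^2T^2 + 2|M_t|^2$ and hence
$$\sup_{t\le T} e^{\varepsilon|X_t|^2} \le e^{2\varepsilon K^2 T^2}\, e^{2\varepsilon S^2}.$$
It therefore suffices to prove $\E\, e^{2\varepsilon S^2}\le N$ for a suitably small $\varepsilon$.

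For the martingale part I would work coordinatewise. Each $M^i_t = \int_0^t \beta_s^{ik}\,dw^k_s$ is a continuous local martingale whose quadratic variation satisfies $\langle M^i\rangle_T = \int_0^T \sum_k (\beta_s^{ik})^2\,ds \le K^2 T$, using the bound $|\beta_s|_{l_2(\bR^d)}\le K$. For fixed $\lambda\in\bR$ the process $Z^\lambda_t := \exp(\lambda M^i_t - \tfrac{\lambda^2}{2}\langle M^i\rangle_t)$ is a nonnegative local martingale, hence a supermartingale with $Z^\lambda_0 = 1$. Combining the maximal inequality for nonnegative supermartingales with $\langle M^i\rangle_t\le K^2T$ and optimizing over $\lambda$ gives the standard estimate $P(\sup_{t\le T}|M^i_t|\ge a) \le 2\,e^{-a^2/(2K^2T)}$. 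A union bound over the $d$ coordinates, using that $|M_t|\ge a$ forces $|M^i_t|\ge a/\sqrt d$ for some $i$, then yields $P(S\ge a) \le 2d\, e^{-a^2/(2dK^2T)}$.

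The final step is to integrate this tail. For $2\varepsilon < (2dK^2T)^{-1}$,
$$\E\, e^{2\varepsilon S^2} = 1 + \int_0^\infty 4\varepsilon a\, e^{2\varepsilon a^2}\, P(S > a)\,da \le 1 + \int_0^\infty 8\varepsilon d\, a\, e^{-(1/(2dK^2T)-2\varepsilon)a^2}\,da,$$
which is a finite constant depending only on $K$, $T$ and the fixed dimension $d$. Choosing, say, $\varepsilon = \varepsilon(K,T) := (8dK^2T)^{-1}$ makes $2\varepsilon$ strictly below the Gaussian rate and renders everything explicit, completing the proof after multiplying by the deterministic factor $e^{2\varepsilon K^2T^2}$.

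I do not expect a deep obstacle here: the entire content sits in the exponential-supermartingale tail estimate, which is classical. The two points that must be handled with care are (i) the deterministic reduction of the exponent quadratic in $X$ to one quadratic in $S$, and (ii) keeping $2\varepsilon$ strictly smaller than the Gaussian rate $(2dK^2T)^{-1}$, so that the last integral converges; this is precisely what forces $\varepsilon$ to depend on $K$ and $T$. The passage from the scalar tail bound to the vector, $l_2$-valued, setting is routine via the union bound over coordinates.
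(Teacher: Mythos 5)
Your proof is correct, but it takes a genuinely different route from the paper's. The paper does not split $X$ into drift and martingale parts at all: it applies It\^o's formula directly to $Y_t=e^{|X_t|^2e^{-\mu t}}$, estimates the resulting drift using $2\alpha X+2|\beta X|^2\le K^2+(2K^2+1)|X|^2$, chooses the damping rate $\mu=2K^2+1$ so that the quadratic term is cancelled, and concludes via localization, Gronwall and Fatou that $\E\, e^{|X_\tau|^2e^{-\mu T}}\le e^{K^2T}$ \emph{uniformly over stopping times} $\tau\le T$; the supremum bound is then obtained not from a maximal inequality for $X$ itself but by invoking Lemma 3.2 of \cite{GK2003}, which upgrades uniform bounds over stopping times to a bound on the supremum at the cost of a factor $\tfrac{2-r}{1-r}$ and a slight reduction of the exponent. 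Your argument --- drift absorbed deterministically, coordinatewise exponential supermartingale giving the Gaussian tail $P(\sup_{t\le T}|M^i_t|\ge a)\le 2e^{-a^2/(2K^2T)}$, union bound over coordinates, layer-cake integration --- is more elementary and self-contained, avoiding both the $e^{-\mu t}$ trick and the external stopping-time lemma; each of your steps checks out, including the bound $\langle M^i\rangle_T\le K^2T$ coming from $|\beta_s|_{l_2(\bR^d)}\le K$ and the convergence condition $2\varepsilon<(2dK^2T)^{-1}$. The one thing you give up is dimension independence: your union bound makes $\varepsilon$ and $N$ depend on $d$ through the rate $(2dK^2T)^{-1}$, whereas the paper's computation, phrased entirely in terms of $|X|^2$, yields $\varepsilon(K,T)$ and $N(K,T)$ exactly as stated in the lemma. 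This is harmless for the paper's purposes, since the constants in Lemma \ref{lemma flow sup} and downstream results are allowed to depend on $d$ anyway; if you wanted to match the stated dependence you could replace the coordinatewise step by the exponential tail estimate for Hilbert-space-valued continuous martingales with bounded bracket, at the price of a less elementary ingredient.
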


\begin{proof} 
A somewhat more general lemma is proved in \cite{Siska}. 
For convenience of the reader we give the proof here.
By It\^o's formula 
$$
Y_{t}:=e^{|X_t|^2e^{-\mu t}}
=1+\int_0^{t} e^{|X_s|^2
e^{-\mu s}-\mu s}\{|\beta_s|^2+2\alpha_s X_s
$$
$$
+2|\beta_s X_s|^2-\mu |X_s|^2\}\,ds+m_{t}
$$
for any $\mu\in\bR$, where $(m_t)_{t\in[0,T]}$ 
is a local martingale starting from 0.  
By simple inequalities 
$$
2\alpha X+2|\beta X|^2
\leq |\alpha|^2+|X|^2+2|\beta|^2 |X|^2\leq K^2+
(2K^2+1)|X|^2. 
$$
Hence for 
$\mu=(2K^2+1)$ and for a stopping time $\tau\leq T$
we have 
$$
\E Y_{t\wedge\tau_n}\leq 1+2K^2\int_0^t \E Y_{s\wedge\tau_n}\,ds,
$$
for $\tau_n=\tau\wedge\rho_n$, where $(\rho_n)_{n=1}^{\infty}$
is a localizing sequence of stopping times for $m$. 
Hence, by Gronwall's lemma,
$$
\E Y_{t\wedge\tau_n}\leq e^{2K^2T}\,.
$$
where $N$ is independent of $n$. 
Letting here $n\rightarrow\infty$, by Fatou's lemma 
we get 
$$
\E e^{|X_{\tau}|^2e^{-\mu T}}
\leq \E e^{|X_{\tau}|^2e^{-\mu \tau}}\leq e^{K^2T}
$$
for stopping times $\tau\leq T$. Hence applying 
Lemma 3.2 from \cite{GK2003} for $r\in(0,1)$ we 
obtain 
$$
\E\sup_{t\leq T}e^{r|X_{\tau}|^2e^{-\mu T}}
\leq \tfrac{2-r}{1-r}e^{rK^2T}\,.
$$
\end{proof}
To formulate our next lemma we consider the stochastic 
differential equation
\begin{equation}                                  \label{SDE}
dX_s=\alpha_s(X_s)\,ds+\beta^k_s(X_s)\,dw_s^k, 
\end{equation}
where $\alpha$ and $\beta=(\beta^k)$ are 
$\cP\otimes\cB(\bR^d)$-measurable function 
on $\Omega\times[0,T]\times\bR^d$, 
with values in $\bR^d$ and $l_2(\bR^d)$ 
such that they are bounded in magnitude by $K$ and satisfy 
the Lipschitz condition in $x\in\bR^d$ with a Lipschitz 
constant $M$, uniformly in the other 
arguments. Then equation \eqref{SDE} with initial condition 
$X_t=x$ has a unique solution $X^{t,x}=(X^{t,x}_s)_{s\in[t,T]}$ 
for any $t\in[0,T]$ and $x\in\bR^d$.

\begin{remark}
It is well known from \cite{Kun} that the solution of \eqref{070414} 
can be chosen to be continuous in $t,x,s$. In the following, 
by $X^{t,x}_s$ we always understand such a continuous modification.
\end{remark}

\begin{lemma}													\label{lemma flow sup}
Set $\hat{X}^{t,x}={X}^{t,x}-x$.
There exists a constant 
$\delta=\delta(d,K,M,T)>0$ such that for any $R$,
\begin{equation}\label{lemma eq1}
\E\sup_{0\leq t\leq s\leq T}\sup_{|x|\leq R}e^{|\hat{X}^{t,x}_s|^2\delta}\leq N(1+R^{d+1/2}),
\end{equation}
and for any $R$ and $r$
\begin{equation}\label{lemma eq2}
P(\sup_{0\leq t\leq s\leq T}\sup_{|x|\leq R}|\hat{X}_s^{t,x}|>r)\leq Ne^{-\delta r^2}(1+R^{d+1/2}),
\end{equation}
where $N=N(d,K, M,T)$.
\end{lemma}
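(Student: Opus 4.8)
The plan is to reduce both inequalities to the exponential moment bound of Lemma \ref{lemma flow exponential} and the Kolmogorov-type criterion of Lemma \ref{thm GL}, the link between them being that the supremum of $e^{\delta|\hat X|^2}$ equals $e^{\delta\sup|\hat X|^2}$, so that controlling a first moment of a scalar process already controls an exponential moment of $\sup|\hat X|^2$.

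First I would record the pointwise bound. For fixed $(t,x)$ the process $\hat X^{t,x}_s=\int_t^s\alpha_r(X^{t,x}_r)\,dr+\int_t^s\beta^k_r(X^{t,x}_r)\,dw^k_r$, $s\in[t,T]$, is of the form \eqref{070414} (after shifting the time origin to $t$) with integrands bounded by $K$; hence Lemma \ref{lemma flow exponential} yields $\varepsilon=\varepsilon(K,T)>0$ and $N=N(K,T)$, independent of $(t,x)$, with $\E\sup_{t\le s\le T}e^{\varepsilon|\hat X^{t,x}_s|^2}\le N$. In particular all polynomial moments of $\sup_s|\hat X^{t,x}_s|$ are bounded uniformly in $(t,x)$. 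Next I would record the standard flow increment estimates: using the Burkholder--Davis--Gundy inequality, the bound $K$ and the Lipschitz constant $M$ together with Gronwall's lemma, for every $q\ge1$ one has $\E|\hat X^{t,x}_s-\hat X^{t',x'}_{s'}|^q\le N(|x-x'|^q+|s-s'|^{q/2}+|t-t'|^{q/2})$ with $N=N(d,K,M,T,q)$. To put the three parameters on the same footing I would view $\hat X$ as a process on the cube $D=[0,T]\times[0,T]\times B_R$, parametrized by $\theta=(t,s,x)$ and extended continuously by $\hat X^{t,x}_s:=0$ for $s\le t$; the map $\theta\mapsto\hat X^{t,x}_s$ is jointly continuous by the continuous choice of the flow.

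The key step is to apply Lemma \ref{thm GL} not to $\hat X$ but to the scalar continuous process $Z(\theta):=e^{\delta|\hat X^{t,x}_s|^2}$, with $p=d+2$ and exponent $q=1$. Fixing first $\alpha\in(0,1/2)$ and the integrability exponent $\ell$ of Lemma \ref{thm GL} (denoted $s$ there, renamed to avoid the clash with the time variable) large enough that $q=1$, $\alpha$, $p$ are admissible, I would then choose $\delta=\delta(d,K,M,T)>0$ small enough that every exponential moment of $Z$ that occurs stays below the threshold $\varepsilon$ of the previous paragraph. The zeroth-order term $\sup_\theta(\E Z(\theta)^\ell)^{1/\ell}$ is then bounded by $N$. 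For the increment term I would use $|e^a-e^b|\le|a-b|(e^a+e^b)$ and $\big||\hat X(\theta)|^2-|\hat X(\theta')|^2\big|\le|\hat X(\theta)-\hat X(\theta')|\,(|\hat X(\theta)|+|\hat X(\theta')|)$, and then Hölder's inequality to separate the increment factor (controlled by the flow estimate, giving a power $|\theta-\theta'|^{1/2}$), the polynomial factor $|\hat X|$ (bounded), and the exponential factor (bounded, by the smallness of $\delta$). This gives $\E|Z(\theta)-Z(\theta')|^\ell\le N|\theta-\theta'|^{\ell/2}$, so the Hölder-quotient term in Lemma \ref{thm GL} with exponent $\alpha<1/2$ is bounded by $N$ as well. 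Since $\sup_\theta Z(\theta)=e^{\delta\sup_\theta|\hat X^{t,x}_s|^2}$ and $|D|=T^2|B_R|\le NR^d\le N(1+R^{d+1/2})$, Lemma \ref{thm GL} delivers \eqref{lemma eq1}.

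Finally, \eqref{lemma eq2} is immediate from \eqref{lemma eq1} by Chebyshev's inequality: $P(\sup_\theta|\hat X(\theta)|>r)=P(\sup_\theta e^{\delta|\hat X(\theta)|^2}>e^{\delta r^2})\le e^{-\delta r^2}\,\E\sup_\theta e^{\delta|\hat X(\theta)|^2}\le Ne^{-\delta r^2}(1+R^{d+1/2})$. The main obstacle is the increment estimate for $Z$: one must verify that after the mean value and Hölder splitting the exponential factors carry an exponent within the admissible range $\varepsilon$ of Lemma \ref{lemma flow exponential}, which is exactly what forces $\delta$ to be small; since the Kolmogorov exponents $\alpha,\ell$ (hence the dimension $p=d+2$) must be fixed before $\delta$, one has to check that these choices are compatible, and that all constants stay uniform in the starting point $(t,x)$ and independent of $R$ except through the volume factor $|D|$.
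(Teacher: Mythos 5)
Your proposal is correct and follows essentially the same route as the paper: reduce \eqref{lemma eq2} to \eqref{lemma eq1} by Chebyshev's inequality, apply Lemma \ref{thm GL} to the scalar process $e^{\delta|\hat X^{t,x}_s|^2}$ over $(t,s,x)$, bound the zeroth-order term via Lemma \ref{lemma flow exponential} with $\delta$ small, and control the increments by combining an elementary exponential/mean-value inequality with H\"older and standard SDE flow moment estimates (the paper invokes Corollary 2.5.5 of \cite{K2} for the latter). The only differences are cosmetic: the paper fixes $\gamma=2(d+2)+1$ with H\"older exponent $1/2$ and uses the gradient bound $|\nabla f|\leq N(\delta)f^2$ where you use $|e^a-e^b|\leq|a-b|(e^a+e^b)$, and your explicit continuous extension of $\hat X^{t,x}_s$ to the product domain $[0,T]^2\times B_R$ tidies up a point the paper leaves implicit.
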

\begin{proof}
It is easy to see that \eqref{lemma eq1} implies \eqref{lemma eq2}, so we need only prove the former. For a fixed $\delta$, to be chosen later, let us use the notations $f(y)=e^{|y|^2\delta}$ and $\gamma=2(d+2)+1$. By Lemma \ref{thm GL}, we have
$$
\E\sup_{0\leq t\leq s\leq T}\sup_{|x|\leq R}f(\hat{X}_s^{t,x})\leq N(1+R^d)\sup_{0\leq t\leq s\leq T}\sup_{|x|\leq R}(\E f^{\gamma}(\hat{X}_s^{t,x}))^{1/\gamma}
$$
\begin{equation}													\label{040414}
+N(1+R^d)
\mathop
{\sup_{0\leq t\leq s\leq T}}_{0\leq t'\leq s'\leq T}
\mathop{\sup_{|x|\leq R}}_{|x'|\leq R}
\left(
\frac{\E|f(\hat{X}^{t,x}_s)-f(\hat{X}^{t',x'}_{s'})|^{\gamma }}
{(|t-t'|^2+|s-s'|^2+|x-x'|^2)^{\gamma /4}}\right)^{1/\gamma}.
\end{equation}
The first term above, by Lemma \ref{lemma flow exponential}, 
provided $\delta\leq \varepsilon/\gamma$, can be estimated by $NR^d$. 
As for the second one,
$$
f(\hat{X}^{t,x}_s)-f(\hat{X}^{t',x'}_{s'})
=\int_0^1\partial f(\vartheta\hat{X}^{t,x}_s
+(1-\vartheta)\hat{X}^{t',x'}_s)(\hat{X}^{t,x}_s-\hat{X}^{t',x'}_{s'})
\,d\vartheta.$$
Notice that $|\nabla f(y)|\leq N(\delta)f^2(y)$, 
therefore by Jensen's inequality and Lemma \ref{lemma flow exponential} 
 again, provided $\delta\leq \varepsilon/(8\gamma)$, we obtain
$$
\E|f(\hat{X}^{t,x}_s)-f(\hat{X}^{t',x'}_{s'})|^{\gamma}
\leq N\E^{1/2}|\hat{X}^{t,x}_s-\hat{X}^{t',x'}_{s'}|^{2\gamma}.
$$
Now the the right-hand side can be estimated by 
standard moment bounds for SDEs, see e.g. 
Corollary 2.5.5 
in \cite{K2}, from which we obtain
$$\left(
\frac{\E|f(\hat{X}^{t,x}_s)-f(\hat{X}^{t',x'}_{s'})|^{2\gamma }}
{(|t-t'|^2+|s-s'|^2+|x-x'|^2)^{\gamma /2}}\right)^{1/(2\gamma)}\leq N(1+R^{1/2}).$$

\end{proof}

\emph{Proof of Theorem \ref{theorem cutoff}.} 
 
Throughout the proof we will use the constant $\lambda=\lambda(d,q)$, 
which stands for a power of $R$, and, like $N$ and $\delta$, 
may change from line to line. Clearly it suffices to prove 
Theorem \ref{theorem cutoff}
with $e^{-\delta R^2}R^{\lambda}$ in place of 
$e^{-\delta R^2}$ in the right-hand side 
of inequality \eqref{main estimate}. 
We also assume first that $q>1$ and $\vartheta=0$. 

The main idea of the proof is based on stochastic representation of 
solutions to linear stochastic PDEs of parabolic type, see \cite{KR},  \cite{Kun}, and \cite{LM}.  This representation can be viewed as the generalization of the well-known Feynman-Kac formula and is derived as follows. First, we consider an equation which differs from the original only by an additional stochastic term driven by an independent Wiener process. The new equation is fully degenerate and taking conditional expectation with respect to the original filtration of its solution gives back $u$. On the other hand, the method of characteristics allows us to transform the fully degenerate equation to a much simpler one. This provides a formula for the representation of $u$, and, more importantly for our purposes, allows us to compare $u$ and $\bar u$ on the level of characteristics.

Recall that $\rho=(\rho^{ir}_t(x))_{i,r=1}^d$ is  
the symmetric nonnegative 
 square root of 
$\alpha=(2a^{ij}-\sigma^{ik}\sigma^{jk})_{i,j=1}^d$ and $\bar\rho$ is the symmetric nonnegative 
 square root of $\bar\alpha=(2\bar a^{ij}-\bar\sigma^{ik}\bar\sigma^{jk})_{i,j=1}^d$. 
 Then due to Assumption 
 \ref{assumption R},  $\rho=\bar\rho$ almost surely
 for all $t\in[0,T]$ and for $|x|\leq R$.
 Let $(\hat w_t^r)_{t\geq0,r=1\ldots d}$ be a $d$-dimensional Wiener 
process, also independent of the $\sigma$-algebra 
$\cF_{\infty}$ generated by $\cF_t$ 
for $t\geq0$. Consider the problem
 \begin{align}
 dv_t(x)=&(L_tv_t(x)+f_t(x))\,dt
 +(M^k_tv_t(x)+g^k_t(x))\,dw^k_t        \nonumber\\
 &+\cN^r_tv_t(x)\,d\hat w^r_t                 \label{v}\\
 v_0(x)=&\psi(x),                                   \label{v ini}
 \end{align}
 where $\cN^r=\rho^{ri}D_i$. 
Then by Corollary \ref{corollary ex}, \eqref{v}-\eqref{v ini} has a unique classical solution $v$, 
and for each $t\in[0,T]$ and $x\in\bR^d$ almost surely 
\begin{equation}                                      \label{vetites}
u_t(x)=\E(v_t(x)|\cF_t). 
\end{equation}
Together with \eqref{v} let us consider the stochastic 
differential equation 
 \begin{equation}                                                                    \label{flow}
dY_t=\beta_t(Y_t)\,dt-\sigma_t^k(Y_t)\,dw^k_t
-{\rho}^r_t(Y_t)\,d\hat{w}^r_t,\quad 0\leq t\leq T, \quad Y_0=y,  
\end{equation} 
where  
$$
\beta_t(y)=-b_t(y)+\sigma^{ik}_{t}(y)D_i\sigma^k_t(y)
+\rho^{ri}_t(y)D_i\rho^r_t(y)+\sigma^k_t(y)\mu^k_t(y), 
\quad t\in[0,T],\,y\in\bR^d,
$$
and $\sigma^k$, $\rho^r$ stand for the column vectors $(\sigma^{1k},\ldots,\sigma^{dk})$, $(\rho^{1r},\ldots,\rho^{dr})$, respectively.
By the It\^o-Wentzell formula from \cite{K}, for 
$$
U_t(y):=v_t(Y_t(y))
$$  
 we have (to ease the notation 
we omit the parameter $y$ in 
$Y_t(y)$)
$$
dv_t(Y_t)=(L_tv_t(Y_t)+f_t(Y_t))\,dt
+(M^k_tv_t(Y_t)+g^k_t(Y_t))\,dw^k_t
+\cN^r_t v_t(Y_t)\,d\hat w^r_t
$$
$$
+(\beta^i_tD_iv_t(Y_t)+a^{ij}_tD_{ij}v_t(Y_t))\,dt
-\sigma^{ik}_tD_iv_t(Y_t)\,dw^k_t
-\cN^r_tv(Y_t)\,d\hat{w}^r_t
$$
\begin{equation}                                                                       \label{IW0}
-\sigma_t^{ik}D_i(M^k_tv_t(Y_t)+g^k(Y_t))\,dt
-\cN^r_t\cN^r_tv_t(Y_t)\,dt.   
\end{equation}
Due to cancellations on the right-hand side of 
\eqref{IW0} we obtain 
\begin{align*}                                                      \label{IW2}
dU_t(y)=&\{\gamma_t(Y_t(y))U_t(y)+\phi_t(Y_t(y))\}\,dt\\
&+\{\mu^k_t(Y_t(y))U_t(y)+g^k_t(Y_t(y))\}\,dw^k_t, 
\quad  U_0(y)=\psi(y), 
\end{align*}
where 
$$
\gamma_t(x):=c_t(x)-\sigma^{ki}_t(x)D_i\mu^k_t(x), 
\quad 
\phi_t(x)=f_t(x)-\sigma^{ki}_t(x)D_ig^k_t. 
$$
Notice 
that in the special case when 
$f=0$, $g=0$, $c=0$, $\mu=0$ and $\psi(x)=x^i$ for 
$i\in\{1,...,d\}$,  we get 
$\tilde v^i_t(Y_t(y))=y^i$ 
for $i=1,...,d$, where $\tilde v^i$ is the solution 
of  \eqref{v}-\eqref{v ini} with $f=c=0$, $g=\mu=0$, $\sigma=0$ and 
$\psi(x)=x^i$. 
Hence for each $t\in[0,T]$ 
the mapping $y\to Y_t(y)\in\bR^d$ has an inverse, 
$Y_t^{-1}$, for almost every $\omega$, 
and the mapping $x\to\tilde v_t(x)=(\tilde v^{i}_t(x))_{i=1}^d$, 
defined  
by the continuous random field 
$(\tilde v_t^{i})_{(t,x)\in H_T}$ gives 
a continuous modification of $Y^{-1}_t$. 
Also, we can write $v_t(x)=U_t(Y^{-1}_t)$. 

As we shall see, due to the data being the same on a large ball, the characteristics $Y$ and $\bar Y$ agree on an event of large probability. This fact and the above representation will yield the estimate \eqref{main estimate}. Set $\bar U_t(y)=\bar v_t(\bar Y_t(y))$, where $\bar v_t(x)$ 
and $\bar Y_t(y)$ are defined as $v_t(x)$ and $Y_t(y)$ 
in \eqref{v}-\eqref{v ini} and \eqref{flow}, respectively, 
with $\bar\frD$ and $\bar\rho$ in place of 
$\frD$ and $\rho$. 

 Introduce the notations $\bB_R=[0,T]\times B_R$ and 
$\bA_R=\bB_R\cap\mathbb{Q}^{d+1}$. Since $u$ and $\bar u$ 
are continuous in both variables,
\begin{equation}                                              \label{rational}
\sup_{(t,x)\in\bB_{\nu R}}|u_t(x)-\bar u_t(x)|
=\sup_{(t,x)\in \bA_{\nu R}}|u_t(x)-\bar u_t(x)|
\end{equation}
Let $\nu'=(1+\nu)/2$ and define the event 
$$
H:=\left[\sup_{(t,x)\in \bB_{\nu R}}|Y_t^{-1}(x)|>\nu'R\right]
\cup\left[\sup_{(t,x)\in \bB_{\nu' R}}|Y_t(x)|>R\right]. 
$$
Then 
$$
H^c=\left[Y_t^{-1}(x)\in B_{\nu' R}, \forall (t,x)\in \bB_{\nu R}\right]
\cap\left[Y_t(x)\in B_R, \forall (t,x)\in \bB_{\nu' R}\right],
$$
and thus on $H^c$
$$
Y_t(x)=\bar Y_t(x)\quad \text{for $(t,x)\in \bB_{\nu' R}$}, 
$$
$$
Y_t^{-1}(x)=\bar Y_t^{-1}(x)\quad \text{for $(t,x)\in \bB_{\nu R}$ },  
$$
and consequently, 
\begin{equation*}
v_t(x)=\bar v_t(x)\quad \text{for $(t,x)\in \bB_{\nu R}$}. 
\end{equation*}
Therefore, by \eqref{vetites} and \eqref{rational}, 
and by Doob's, H\"older's, and the conditional Jensen inequalities,
$$
\E \sup_{(t,x)\in\bB_{\nu R}}|u_t(x)-\bar u_t(x)|^q 
\leq 
\E \sup_{t\in[0,T]\cap\mathbb{Q}}
|\E({\bf1}_H\sup_{(\tau,x)\in\bA_{\nu R}}
|v_{\tau}(x)-\bar v_{\tau}(x)||\mathcal{F}_t)|^q
$$
\begin{equation}                                                                \label{intermediate2}
\leq 
\frac{q}{q-1}(P(H))^{1/r^{\prime}}
\E^{1/r}(
\sup_{(\tau,x)\in H_T}
|v_\tau(x)-\bar v_\tau(x)|
^{qr^{\prime}})
\end{equation}
\begin{equation}                                             \label{estimate0}
\leq \frac{2^{q-1}q}{q-1}(P(H))^{1/r^{\prime}}
V_T
\end{equation}
with 
$$
V_T:=\E^{1/r}
\sup_{(\tau,x)\in H_T}
|v_\tau(x)|
^{qr}
+\E^{1/r}
\sup_{(\tau,x)\in H_T}
|\bar v_\tau(x)|
^{qr},   
$$ 
for  $r>1$, $r^{\prime}=r/(r-1)$, 
provided $q>1$.  By Theorem \ref{theorem ex} 
\begin{equation}                                               \label{estimate3}
V_T\leq N\E^{1/r}(\cK^{qr}_{m,p,0}(T)
+\bar\cK^{qr}_{m,p,0}(T)). 
\end{equation}

We can estimate $P(H)$ as follows. Clearly, 
$$
P(H)\leq P(\sup_{(t,x)\in \bB_{\nu R}}|Y_t^{-1}(x)|>\nu'R)
+P(\sup_{(t,x)\in \bB_{\nu' R}}|Y_t(x)|>R)=:J_1+J_2.
$$
For $\hat{Y}_t(x)={Y}_t(x)-x$ by \eqref{lemma eq2} we have 
$$
J_2\leq P(\sup_{(t,x)\in \bB_{\nu' R}}|\hat{Y}_t(x)|>(1-\nu')R)
\leq NR^{d+1/2}e^{-\delta (1-\nu)^2R^2}.
$$
Also, we have 
$$
J_1
\leq
\sum_{l=0}^{\infty}
P(\exists(t,x)\in [0,T]\times (B_{2^{l+1}\nu' R}
\setminus B_{2^{l}\nu' R}):|Y_t(x)|\leq \nu R)
$$
$$
\leq
\sum_{l=0}^{\infty}
P(\sup_{(t,x)\in \bB_{2^{l+1}\nu' R}}|\hat{Y}_t(x)|
\geq (2^{l}\nu'-\nu)R).
$$
Using \eqref{lemma eq2} again gives
$$
J_1
\leq N\sum_{l=0}^{\infty}
e^{-\delta(2^{l}\nu'-\nu)^2R^2}(2^{l+1}\nu' R)^{d+1} \leq Ne^{-\delta R^2}        
$$
We can conclude that
\begin{equation}                                                                                \label{prob estimate}
P(H)\leq Ne^{-\delta R^2}, 
\end{equation}
where $N$ and $\delta$ are positive constants, 
depending only on $d$, $K$ and $T$. 

Combining this with \eqref{estimate0} and \eqref{estimate3} we can finish 
the proof of the theorem under the additional conditions. 

For general $\vartheta$ one applies the same arguments as in Remark \ref{remark poly}.
Finally \eqref{main estimate} for the case $q\in(0,1]$ follows easily from 
standard arguments using Lemma 3.2 from \cite{GK2003}. \qed

\section{An application - finite differences}                                                          \label{Section-findiff}
In this section we apply Theorem \ref{theorem cutoff} to present 
a numerical scheme approximating the initial value problem 
\eqref{SPDE general}-\eqref{SPDE general-ini}. 
We make use of the results of \cite{Gy} on the rate 
and acceleration of finite difference approximations 
in the spatial variable, which, together with 
a time discretization and a truncation - whose error can be 
estimated using Theorem \ref{theorem cutoff} - yields a 
fully implementable scheme. We shall carry out 
the steps of approximation in the following order: 
spatial discretization by finite differences, localization of the 
finite difference scheme, and discretization in time via implicit Euler's method. 
This of course requires an analysis of the Euler scheme, 
to present an error estimate for it, which does not depend on the spatial mesh size and the 
localization. Furthermore, in our full discretization scheme we shall incorporate  Richardson's extrapolation, which will allow us to improve the accuracy of the scheme in the spatial mesh size $h$.

First we introduce the finite difference approximation in the 
spatial variable for \eqref{SPDE general}-\eqref{SPDE general-ini}. 
To this end, let $\Lambda_1\subset\R^d$ be a finite set, containing the zero vector, satisfying the following natural condition: 
$\Lambda_0:=\Lambda_1\setminus\{0\}$ is not empty, and 
 if a subset $\Lambda'\subset\Lambda_0$ is linearly dependent, then it is linearly dependent over the rationals. 
Let $h>0$, and define the grid
$$
\G_h=\{h\sum_{i=1}^n\lambda_i:\lambda_i\in\Lambda_1\cup-\Lambda_1,n=1,2,\ldots\}. 
$$
Due to the assumption on $\Lambda_1$, $\G_h$ 
has only finitely many points in every ball around the origin in $\bR^d$. 
Define for $\lambda\in\Lambda_0\cup-\Lambda_0$, the finite difference operators
$$
\delta^h_{\lambda}\varphi(x)=\frac{1}{2h}(\varphi(x+h\lambda)-2\varphi(x)+\varphi(x-h\lambda)),
$$
and let $\delta^h_0$ stand for the identity operator. 
 To approximate the Cauchy problem \eqref{SPDE general}
-\eqref{SPDE general-ini}, for $h>0$ we consider the equation
\begin{equation}                                             \label{eq:finite diff}
du_t(x)=({L}^h_tu_t(x)+f_t(x))\,dt+\sum_{k=1}^{\infty}({M}^{h,k}_tu_t(x)+g_t^k(x))dw^k_t
\end{equation}
on $[0,T]\times\G_h$, with initial condition
\begin{equation}                                           \label{eq:finite diff-ini}
u_0(x)=\psi(x), 
\end{equation}
where ${L}^h$ and ${M}^{h,k}$ are difference operators 
of the form 
$$
{L}^h_t(x)=\sum_{\lambda,\kappa\in\Lambda_1}\fra_t^{\lambda\kappa}(x)\delta^h_{\lambda}\delta^h_{\kappa},
\quad 
{M}^{h,k}_t(x)=\sum_{\lambda\in\Lambda_1}
\frb^{\lambda,k}_t(x)\delta^h_{\lambda},\quad k=1,2,\ldots,  
$$
with some real-valued $\cP\otimes\cB(\bR^d)$-measurable 
$\fra_t^{\lambda\kappa}$ and $\frb^{\lambda,k}$ 
on $\Omega\times[0,T]$, such that 
\begin{equation}                                       \label{bounded}
|\fra_t^{\lambda\kappa}(x)|
\leq K \quad\text{and}\quad 
\sum_{k}|\frb_t^{\lambda,k}(x)|^2\leq K^2
\end{equation}
for all $\lambda,\kappa\in\Lambda_1$, $t\in[0,T]$, $x\in\bR^d$ 
and $\omega\in\Omega$, where $K$ is a constant. 
\begin{remark}                                \label{remark modification}
Here $\psi$, $f$ and $g$ are the same as in 
\eqref{SPDE general}-\eqref{SPDE general-ini} 
and we will assume that they satisfy Assumption 
\ref{assumption regularitydata} with $m>d/2$, $p=2$ 
and $\vartheta=0$. Thus by Sobolev's embedding of $W^m_2$ into $C_b$, 
the space of bounded continuous functions, for all 
$\omega$ we can find a continuous function of $x$ which is
equal to $\psi$ almost everywhere, and 
for each $t$ and $\omega$ we have 
continuous functions of $x$ which coincide with  $f_t$ and $g_t$
for almost every $x\in\bR^d$. Here and in the following 
we always take such continuous modifications if they exist, thus we always assume 
that $\psi$, $f_t$, and $g_t$ 
are continuous in $x$ for all $t$ (for $g=(g^k)_{k=1}^\infty$ 
this means, as usual, continuity as a function with values in $l_2$). 
In particular, terms like $f_t(x)$ in \eqref{eq:finite diff} make sense. 
 We note that for $m>d/2$ one can use 
Sobolev's theorem on embedding $W^m_2$ to $C_b$ 
to show also that 
if Assumption \ref{assumption regularitydata} holds with 
$m>d/2$, $p=2$ $\theta=0$, then 
$$
\sum_{x\in\G_h}|\psi(x)|^2h^d+\int_0^T\big(\sum_{x\in\G_h}|f_t(x)|^2h^d
+\sum_{x\in\G_h}\sum_k|g^k_t(x)|^2h^d\big)\,dt
$$
$$
\leq N\|\psi(x)\|^2_m+N\int_0^T\|f_t\|^2_m
+\sum_k\|g^k_t\|^2_m\,dt<\infty \,\text{(a.s.)}, 
$$
with a constant  $N=N(\Lambda_0,d)$, where $\|\cdot\|_m:=|\cdot|_{m,2.0}$. 
(See Lemma 4.2 in \cite{GK2010}.)
\end{remark}

Clearly, for $\varphi\in C_0^{\infty}(\bR^d)$ and $\lambda\neq0$ 
$$
\delta^h_{\lambda}\varphi(x)\to \partial_{\lambda}\varphi(x):=\lambda^iD_i\varphi(x)\quad \text{as $h\to0$}.
$$
Thus, in order to approximate $L$ and $M^k$ by $L^h$ and 
$M^{h,k}$, respectively, we need the following compatibility condition.
\begin{assumption}                                                                      \label{as2:compatibility}
For every $i,j=1,\ldots,d$, $k=1,\ldots$ 
and for $P\otimes dt\otimes dx$-almost all 
$(\omega,t,x)\in\Omega\times[0,T]\times\bR^d$ 
$$
a^{ij}=\sum_{\lambda,\kappa\in\Lambda_0}\fra^{\lambda\kappa}\lambda^i\kappa^j,\,\,
b^i=\sum_{\lambda\in\Lambda_0}(\fra^{0\lambda}+\fra^{\lambda0})\lambda^i,\,\,c=\fra^{00}, 
$$
$$
\sigma^{ik}=\sum_{\lambda\in\Lambda_0}\frb^{\lambda,k}\lambda^i,
\quad
 \mu^k=\frb^{0,k}.
$$
\end{assumption}

For each $x\in\G_h$ equation \eqref{eq:finite diff} is a stochastic differential 
equation (SDE), i.e., in general, \eqref{eq:finite diff}-\eqref{eq:finite diff-ini} is an infinite 
system of SDEs. 
To replace this with a finite system we make the coefficients, together  
with the free  and initial data, vanish outside of a large ball  
by multiplying them with a cutoff function $\zeta_R$, 
which satisfies the following condition. 
\begin{assumption}                                                      \label{assumption cutoff}
For an integer $m\geq0$ and a real number $R>0$ the function $\zeta_R$ is 
a continuous function with compact support on $\bR^d$, such that 
$\zeta(x)=1$  for $|x|\leq R$ and the derivatives of $\zeta_R$ up to order 
$m+1$ are continuous functions, bounded by a constant $C$. 
\end{assumption}

 In this way we replace  
\eqref{eq:finite diff}-\eqref{eq:finite diff-ini} with the system of SDEs
\begin{equation}                                                               \label{finite}
du_t(x)=(L^{h,R}_tu_t(x)+f^R_t(x))\,dt
+(M^{h,R,k}_tu_t(x)+g_t^{R,k}(x))\,dw_t^k, \quad t\in[0,T],
\end{equation}
with initial condition 
\begin{equation}                                                                 \label{finite ini}
u_0(x)=\psi^R(x), 
\end{equation}
for $x\in\G_h\cap\rm{supp}\,\zeta_R$, where $\rm{supp}\,\zeta_R$ 
is the support of $\zeta_R$,  
\begin{equation}                                                 \label{free R}
(\psi^R,f_t^R, g_t^{R,k}):=(\zeta_R\psi,\zeta_Rf_t,\zeta_R g_t^{k})
\end{equation}
and 
$$
{L}^{h,R}_t:=\sum_{\lambda,\kappa\in\Lambda_1}\fra_t^{\lambda\kappa,R}\delta^h_{\lambda}\delta^h_{\kappa},
\quad M^{h,R,k}_t:=\sum_{\lambda\in\Lambda_1}
\frb^{\lambda,R,k}_t\delta^h_{\lambda},\quad k=1,2,\ldots,  
$$
with 
\begin{equation}\label{coeffcutoff}
\fra^{\lambda\kappa,R}
:=\zeta_R^2\fra^{\lambda\kappa}
\quad \text{for $\lambda,\kappa\in\Lambda_0$}, 
\end{equation}
\begin{equation}\label{coeffcutoff2}
(\fra^{0\kappa,R}, \fra^{\lambda0,R}, \frb^{\lambda,R,k})
:=(\zeta_R\fra^{0\kappa},\zeta_R \fra^{\lambda0},\zeta_R \frb^{\lambda,k})
\quad \text{for $\lambda,\kappa\in\Lambda_1$, $k\geq1$}.
\end{equation}

At this point our approximation is a finite dimensional (affine) linear SDE,  whose coefficients are bounded by $K$ owing to \eqref{bounded}, and furthermore, by virtue of Remark \ref{remark modification}, for each $x$, $f^R(x)$ and $g^R(x)$ are square integrable in time under Assumption \ref{assumption cutoff} and \ref{assumption free} below with $m>d/2$.

Hence \eqref{finite}-\eqref{finite ini} has a unique solution 
$$
\{u^{h,R}_t(x): x\in\G_h\cap\rm{supp\,\zeta_R}\}_{t\in[0,T]},  
$$
by virtue of a well-known 
theorem of It\^o on finite dimensional SDEs with Lipschitz continuous coefficients.  
The approximation of such equations are well studied, various time-discretization  
methods can be used, each of them with their own advantages and  disadvantages. Here we chose the implicit Euler method, formulated as follows.

We take a mesh-size $\tau=T/n$ for an integer $n\geq1$, and 
approximate \eqref{eq:finite diff}-\eqref{eq:finite diff-ini} by the equations
\begin{equation}                                        \label{eq:discrete time-ini}
v_0(x)=\psi^R(x),  
\end{equation}
$$
v_i(x)=v_{i-1}(x)+(L^{h,R}_{\tau i}v_i(x)+f_{\tau(i-1)}^R(x))\tau
$$
\begin{equation}                                            \label{eq:discrete time}
+\sum_{k=1}^{\infty}
(M^{h,R,k}_{\tau(i-1)}v_{i-1}(x)+g^{R,k}_{\tau(i-1)}(x))\xi_{i}^k, 
\quad i=1,2,\ldots,n,
\end{equation}
for $x\in\G_h\cup\rm{supp}\,\eta_R$,  where $\xi_i^k=w^k_{i\tau}-w^k_{(i-1)\tau}$. 

\begin{remark}
In many applications, including the Zakai equation for nonlinear filtering, the driving noise is finite dimensional. If this is not the case, one needs another level of approximation, at which the infinite sum in \eqref{eq:discrete time} is replaced by its first $m$ terms. We shall not discuss this here.
\end{remark}

\begin{remark}
As mentioned before, Euler approximations for SDEs are very well studied. 
Therefore, while it is far from immediate that the error is of the desired order, 
independently of $h$ and $R$, 
the implementation of the scheme goes as usual, see e.g. \cite{Klo} and its references.
\end{remark}

To prove to solvability of the fully discretized equation \eqref{eq:discrete time-ini}-\eqref{eq:discrete time} and estimate its error from the true solution of \eqref{SPDE general}-\eqref{SPDE general-ini} on the space-time grid we pose the following assumptions. As for the following we confine ourselves to the $L_2$-scale, 
without weights, we use the shorthand 
notation $\|\cdot\|_m=|\cdot|_{W^m_{2,0}}$, $\|\cdot\|=\|\cdot\|_0$. 

\begin{assumption}                                             \label{as2:parabolicity}
For all $(\omega,t,x)\in\Omega\times[0,T]\times\bR^d$ 
$$
 \sum_{\lambda,\kappa\in\Lambda_0}(2\fra^{\lambda\kappa}-\frb^{\lambda,k}\frb^{\kappa,k})z^{\lambda}z^{\kappa}\geq 0
$$
for all $z=(z^{\lambda})_{\lambda\in\Lambda_0}$, 
$z^{\lambda}\in\bR$.  
\end{assumption}

In the following assumptions $m$ and $l$ are nonnegative integers, 
as before, and will be more specified in the theorems below.
\begin{assumption}                                                   \label{as2:regularity}
The derivatives in $x$ of $\fra^{\lambda\kappa}$ up 
to order $\max(m,2)$ are $\mathcal P\otimes\mathcal B(\bR^d)$-measurable 
functions, bounded by $K$ for all $\lambda,\kappa\in\Lambda_1$. 
The derivatives in $x$ of   $\frb^{\lambda}
=(\frb^{\lambda r})_{r=1}^{\infty}$  
up to order $m+1$ are 
$\mathcal P\otimes\mathcal B(\bR^d)$-measurable 
$l_2$-valued functions, bounded by $K$, for all $\lambda\in\Lambda_1$.
\end{assumption}

\begin{assumption}                       \label{assumption free}
The initial value, 
$\psi$ is an $\cF_0$-measurable random variable 
with values in $W^m_{2}$. 
The free data, $ f_t $ and $ g_t =(g^k)_{k=1}^{\infty}$ are 
predictable 
processes with values in $W^m_{2}$ and 
$W^{m+1}_{2}(l_2)$, respectively, 
such that almost surely
\begin{equation}			                                    \label{K_m}	
\mathcal K_{m}^2:=\|\psi\|_{m}^2+\int_0^T
\big(\|f_t\|^2_{m}+\|g_t\|^2_{m+1 }\big)\,dt<\infty.
\end{equation}
\end{assumption}

\begin{assumption}                                                                  \label{assumption holder}
There exists a constant $H$  such that 
$$
\E\|f_t-f_s\|^2_{l}+\E\|g_s-g_t\|^2_{l+1}\leq H|t-s|, 
\quad 
\E\|f_t\|^2_{l+1}+\E\|g_t\|^2_{l+2}\leq H
$$
for all $s,t\in[0,T]$, and   
$$
|D^{\alpha}(\fra^{\lambda\kappa}_t(x)-\fra^{\lambda\kappa}_s(x))|^2\leq H|t-s|, 
\quad 
\sum_{k}|D^{\beta}(\frb^{\lambda,k}_t(x)-\frb^{\lambda,k}_s(x))|^2\leq H|t-s|, 
$$
for all $\omega\in\Omega$, $x\in\R^d$, $s,t\in[0,T]$ and multi-indices $\alpha$ 
and $\beta$ with $|\alpha|\leq l$ and $|\beta|\leq l+1$. 
\end{assumption}

\begin{remark}                                                              \label{remark parabolicity1}
If Assumptions \ref{as2:compatibility}  
and \ref{as2:parabolicity} hold then 
$$
(2a^{ij}-\mu^{ir}\mu^{jr})z^iz^j=
(2\sum_{\lambda,\kappa\in\Lambda_0}\fra^{\lambda\kappa}\lambda^{i}\kappa^{j}
-\sum_{\lambda\in\Lambda_0}\frb^{\lambda,k}\lambda^i
\sum_{\kappa\in\Lambda_0}\frb^{\kappa,k}\kappa^j)z^iz^j
$$
\begin{equation*}                                                  
=\sum_{\lambda,\kappa\in\Lambda_0}
(2\fra^{\lambda\kappa}
-\frb^{\lambda,k}\frb^{\kappa,k})
(\lambda^{i}z^i)(\kappa^{j}z^j)\geq0  
\end{equation*}
for all $z=(z^1,...,z^d)\in\bR^d$, i.e.,
Assumption \ref{assumption parabolicity} also holds. 
Clearly, Assumptions \ref{as2:compatibility} and \ref{as2:regularity} 
imply Assumption \ref{assumption regularitycoeff} (a)-(b). 
Notice that Assumption \ref{assumption free} is the same 
as Assumption \ref{assumption regularitydata} with $p=2$ and 
$\vartheta=0$. 
Thus if Assumptions \ref{assumption regularitycoeff} (c), and 
\ref{as2:compatibility} through \ref{assumption free} hold with $m>2+d/2$, 
then by virtue of Corollary \ref{corollary ex} equation \eqref{SPDE general}  
with initial condition \eqref{SPDE general-ini} has a unique classical 
solution 
$$
u=\{u_t(x):t\in[0,T], x\in\bR^d\}. 
$$
\end{remark}

Now we are in the position to formulate 
the first main theorem of this section, with the notation $\G_h^R=\G_h\cap B_R$.
\begin{theorem}                                               \label{theorem main1}
Let $l>d/2$ be an integer. Let Assumptions 
\ref{as2:compatibility} through \ref{as2:regularity} hold with $m\geq 4+l$, 
and let Assumptions \ref{assumption regularitycoeff} (c) and 
Assumption \ref{assumption holder} hold with $m\geq2+l$ and with $l+1$, 
respectively.  Then if $\tau$ is sufficiently small, 
then for any $h>0$, $R>1$  
the system of equations
\eqref{eq:discrete time-ini}-\eqref{eq:discrete time} 
has a unique solution $(v^{R,h,\tau}_i)_{i=0}^n$.  Moreover, 
for any $\nu\in(0,1)$, $q>1$  
we have 
$$
\E\max_{i=0,\ldots,n}\max_{x\in\G^{\nu R}_h}
|u_{\tau i}(x)-v^{h,R,\tau}_i(x)|^2
$$
\begin{equation}\label{eq:full discret error}
\leq N_1e^{-\delta R^2}\E^{1/q}\cK_{l+2}^{2q}
+N_2(h^{4}+\tau)(1+\E\cK_m^{2}),
\end{equation}
with constants $N_1$ and $\delta>0$ depending only on $K$, $d$, $T$, $C$ 
$q$, $\nu$ and  $\Lambda_0$, 
and a constant $N_2=N_2(K,T,d,C,H,\Lambda_0)$. 
\end{theorem}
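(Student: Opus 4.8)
The plan is to control the total error by splitting it, via the triangle inequality, into the three contributions that correspond to the three approximation steps announced above: a localization error, a spatial finite difference error, and a temporal (implicit Euler) error. Each is then handled by a tool already available: Theorem \ref{theorem cutoff} for the localization, the estimates of \cite{Gy} for the spatial discretization, and Theorem \ref{theorem time error} for the time stepping. Concretely, I would introduce two intermediate objects. Let $\bar u$ be the classical solution of Eq($\bar\frD$), where $\bar\frD$ is the cut-off of $\frD$ obtained by multiplying the free data by $\zeta_R$ and the coefficients according to \eqref{coeffcutoff}--\eqref{coeffcutoff2}, and let $u^{h,R}_t(x)$ be the (time-continuous) solution of the localized finite difference system \eqref{finite}--\eqref{finite ini}. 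Then for $x\in\G^{\nu R}_h$ and $i\in\{0,\dots,n\}$ I would write
\[
u_{\tau i}(x)-v^{h,R,\tau}_i(x)=\big(u_{\tau i}(x)-\bar u_{\tau i}(x)\big)+\big(\bar u_{\tau i}(x)-u^{h,R}_{\tau i}(x)\big)+\big(u^{h,R}_{\tau i}(x)-v^{h,R,\tau}_i(x)\big),
\]
and bound the squared maximum over $i$ and $x$ of each piece using $(a+b+c)^2\le 3(a^2+b^2+c^2)$.

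For the first, localization, term I would verify that $\frD$ and $\bar\frD$ meet the hypotheses of Theorem \ref{theorem cutoff} with $p=2$, $\vartheta=0$, $m=l+2>2+d/2$ (the latter being exactly where $l>d/2$ enters, since it is what promotes the $W^{l+2}$-estimates to the pointwise bound). The compatibility relations of Assumption \ref{as2:compatibility} together with \eqref{coeffcutoff}--\eqref{coeffcutoff2} give $\bar a=\zeta_R^2 a$, $\bar\sigma=\zeta_R\sigma$, $\bar b=\zeta_R b$, $\bar c=\zeta_R^2 c$, $\bar\mu=\zeta_R\mu$, whence $\bar\alpha=\zeta_R^2\alpha$ and $\bar\rho=\zeta_R\rho$; thus parabolicity is preserved, and the uniform bounds on the derivatives of $\zeta_R$ from Assumption \ref{assumption cutoff} give Assumption \ref{assumption regularitycoeff}(c) for $\bar\rho$ with a constant independent of $R$, as well as $\bar\cK_{l+2}\le N\cK_{l+2}$. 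Since $\zeta_R=1$ on $B_R$, Assumption \ref{assumption R} holds, and (using Remark \ref{remark parabolicity1} to obtain the remaining regularity hypotheses) Theorem \ref{theorem cutoff}, applied with its parameter $q$ equal to $2$ and its parameter $r$ equal to the present $q$, bounds the first term by $N_1 e^{-\delta R^2}\E^{1/q}\cK^{2q}_{l+2}$. I would also record here that, because the coefficients and data of $\bar\frD$ vanish off $\mathrm{supp}\,\zeta_R$, the finite difference solution of Eq($\bar\frD$) vanishes there too and coincides on the grid with $u^{h,R}$; in particular \eqref{finite} is genuinely finite dimensional, and the middle term is precisely the spatial finite difference error for Eq($\bar\frD$).

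That middle term is then supplied by \cite{Gy}: under Assumptions \ref{as2:compatibility}--\ref{as2:regularity} with $m\ge 4+l$ the solution of \eqref{finite} is controlled in terms of $\bar\cK_m\le N\cK_m$, and the expansion of the symmetric finite difference scheme in even powers of $h$, combined in the present case with one Richardson extrapolation (Theorem \ref{theorem main1} being the corresponding special case of Theorem \ref{theorem main2}), removes the leading $h^2$ term and yields the rate $N_2 h^4(1+\E\cK_m^2)$; here again $l>d/2$ furnishes the discrete Sobolev embedding converting the $W^l$-type bound into the maximum over grid points. The third, temporal, term is exactly the situation addressed by Theorem \ref{theorem time error}: at this stage \eqref{finite} is a finite dimensional affine linear SDE, and for $\tau$ below a threshold that is uniform in $h$ and $R$ (guaranteed by the discrete parabolicity of Assumption \ref{as2:parabolicity}, which keeps $I-\tau L^{h,R}$ invertible uniformly in $h$) the scheme \eqref{eq:discrete time-ini}--\eqref{eq:discrete time} is uniquely solvable and obeys $\E\max_i\max_x|u^{h,R}_{\tau i}(x)-v^{h,R,\tau}_i(x)|^2\le N_2\tau(1+\E\cK_m^2)$. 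Summing the three bounds gives \eqref{eq:full discret error}, with the reductions to $q>1$, $\vartheta=0$, $p=2$ built into the choices above.

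The hard part will be the third term, i.e. Theorem \ref{theorem time error}: the finite dimensional SDE \eqref{finite} becomes increasingly stiff as $h\to0$, the operator norm of $L^{h,R}$ growing like $h^{-2}$, so the classical Euler estimates for SDEs, whose constants depend on the Lipschitz constant of the coefficients, are useless. One must instead run a discrete parabolic energy argument in the $\ell_2(\G_h)$-norm, using Assumption \ref{as2:parabolicity} to absorb the second order difference terms, and thereby obtain both unique solvability and an $O(\tau)$ stability-plus-consistency estimate with constants independent of $h$ and $R$; this is precisely what Theorem \ref{theorem time error} isolates. A secondary, purely bookkeeping, difficulty is to keep every constant independent of $R$ when passing from $\bar\cK$ to $\cK$, which reduces to the uniform derivative bounds on $\zeta_R$ in Assumption \ref{assumption cutoff}.
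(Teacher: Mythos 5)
Your proposal follows the paper's own proof essentially step for step: you use the same three-term decomposition through the localized PDE solution (your $\bar u$ is the paper's $u^{0,R}$, the classical solution of Eq($\frD^R$) with $\frD^R=(\zeta_R\psi,\zeta_R^2a,\zeta_Rb,\zeta_Rc,\zeta_R\sigma,\zeta_R\mu,\zeta_Rf,\zeta_Rg)$) and the localized finite difference solution $u^{h,R}$, and you estimate the three pieces with exactly the tools the paper uses: Theorem \ref{theorem cutoff} (with its exponent set to $2$ and its parameter $r$ set to the present $q$), Theorem \ref{theorem Gy}(c), and Corollary \ref{corollary time error}(iii). Notably, you avoided the trap the paper explicitly warns about --- localizing the finite difference scheme after discretizing in space, which leaves a middle term that cannot be estimated directly --- by cutting off at the PDE level first; that is precisely the organizing idea of the paper's proof.

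Two local corrections are needed. First, since $c=\fra^{00}$ and $0\in\Lambda_1$, the cut-off rule \eqref{coeffcutoff2} gives $\fra^{00,R}=\zeta_R\fra^{00}$, so the induced zero-order coefficient is $\bar c=\zeta_R c$, not $\zeta_R^2 c$; with your choice the cut-off difference operators $L^{h,R}$, $M^{h,R,k}$ would fail to be compatible, in the sense of Assumption \ref{as2:compatibility}, with the data of your $\bar u$, and Theorem \ref{theorem Gy}(c) could not be applied to the middle term. Second, and more substantively, no Richardson extrapolation occurs in Theorem \ref{theorem main1}: it is the case $r=0$ of Theorem \ref{theorem main2}, $v^{h,R,\tau}$ is the plain (unextrapolated) scheme, and the $h^{4}$ in \eqref{eq:full discret error} is simply the square of the $h^2$ rate, obtained from Theorem \ref{theorem Gy}(c) with $r=0$ and $q=2$ (requiring $m>4+d/2$, which $m\geq 4+l$ with $l>d/2$ supplies). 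Had you actually mixed two mesh sizes as you describe, you would be estimating $\bar v^{h,R,\tau}$ rather than $v^{h,R,\tau}$, i.e., proving the $r=1$ case of Theorem \ref{theorem main2} (squared rate $h^{8}$), not the stated theorem. Neither slip affects the architecture of your argument, and your closing remarks on Theorem \ref{theorem time error} --- the need for a threshold on $\tau$ and constants uniform in $h$ and $R$, obtained by a discrete energy estimate rather than classical Lipschitz-based Euler bounds --- correctly identify where the real analytic work lies, although that theorem is proved separately in the paper and is merely cited in the proof of the present statement.
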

As mentioned above we want to have approximations 
with higher order accuracy in $h$ by extrapolating from 
$v^{h,R,\tau}$. Let us recall the method of Richardson's extrapolation. 
This technique, first introduced in \cite{R}, 
allows one to accelerate the rate of convergence 
by appropriately mixing approximations with different mesh sizes, 
given that a power expansion of the error in terms of the mesh sizes
is available. We shall use this, based on results of \cite{Gy}, 
to obtain higher order approximations with respect to the spatial mesh size 
$h$. To formulate the extrapolation, 
let $r\geq0$, $V$ be the $(r+1)\times(r+1)$ 
Vandermonde matrix $V^{ij}=(4^{-(i-1)(j-1)})$,
\begin{equation}                                             \label{c}
(c_0,c_1,\ldots,c_r):=(1,0,\ldots,0)V^{-1},
\end{equation}
and define
\begin{equation}                                                                         \label{extrapol}
\bar v^{h,R,\tau}:=\sum_{i=0}^rc_i{v}^{h/2^i,R,\tau}, 
\end{equation}
where ${v}^{h/2^i,R,\tau}$ denotes the solution of 
\eqref{eq:discrete time-ini}-\eqref{eq:discrete time} with $h/2^i$ 
in place of $h$.
As we shall see, even by mixing only two approximations with 
different mesh sizes, that is, setting $r=1$, the extrapolation increases the order of accuracy in $h$ from 2 to 4.

The second main result of this section is the following.
\begin{theorem}                                              \label{theorem main2}
In additions to the assumptions of Theorem \ref{theorem main1} 
let Assumptions \ref{assumption cutoff}, \ref{as2:regularity} and 
\ref{assumption free} hold with $m\geq 4r+4+l$. 
Then for 
the extrapolation $\bar v^{h,R,\tau}$ 
we have 
$$
\E\max_{i=0,\ldots,n}\max_{x\in\G_h^{\nu R}}
|u_{\tau i}(x)-\bar v^{h,R,\tau}_i(x)|^2
$$
\begin{equation}                                                           \label{eq:full discret accelerated}
\leq N_1e^{-\delta R^2}\E^{1/q}\cK^{2q}_{2+l}
+N_2(h^{2(2r+2)}+\tau)(1+\E\cK_m^{2})
\end{equation}
for any $\nu\in(0,1)$ and $q>1$,  
with constants $N_1$ and $\delta>0$, depending only 
on $K$, $d$, $T$, $C$, $\nu$, $q$ and $\Lambda_0$, 
and a constant $N_2=N_2(K,T,d,C,H,r,\Lambda_0)$.  
\end{theorem}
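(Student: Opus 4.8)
The plan is to split the total error $u-\bar v^{h,R,\tau}$ on $\G_h^{\nu R}$ into a localization error, a spatial finite-difference error (improved by extrapolation), and a temporal error, and to bound each separately. I would introduce two intermediate objects. First, let $\bar u$ be the classical solution of the SPDE \eqref{SPDE general}--\eqref{SPDE general-ini} whose data are the cut-off data obtained from \eqref{coeffcutoff}--\eqref{free R} through the compatibility relations of Assumption \ref{as2:compatibility}, i.e. $\bar a=\zeta_R^2 a$ and $(\bar b,\bar c,\bar\sigma,\bar\mu,\bar\psi,\bar f,\bar g)=\zeta_R(b,c,\sigma,\mu,\psi,f,g)$. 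By construction these coincide with the original data on $B_R$, parabolicity is preserved since $\bar\alpha=\zeta_R^2\alpha\ge0$, and $\bar\rho=\zeta_R\rho$ has derivatives bounded uniformly in $R$. Second, let $u^{h/2^i,R}$ be the continuous-in-time solution of the localized system \eqref{finite} and set $\bar u^{h,R}:=\sum_{i=0}^r c_i u^{h/2^i,R}$. Using $\sum_i c_i=1$, write
\[
u-\bar v^{h,R,\tau}=(u-\bar u)+(\bar u-\bar u^{h,R})+\sum_{i=0}^r c_i\big(u^{h/2^i,R}-v^{h/2^i,R,\tau}\big).
\]

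For the first term I would apply Theorem \ref{theorem cutoff} to $u$ and $\bar u$, whose data agree on $B_R$ by Assumption \ref{assumption R}. Taking there $p=2$, $\vartheta=0$, its moment exponent equal to $2$ and its parameter $r$ equal to the present $q$, at regularity index $2+l>2+d/2$, and noting that the cut-off data norms are dominated by the original ones uniformly in $R$, gives $\E\sup_{B_{\nu R}}|u-\bar u|^2\le N_1 e^{-\delta R^2}\E^{1/q}\cK^{2q}_{2+l}$. Restricting the supremum to the grid $\G_h^{\nu R}$ and to the time levels $\tau i$ only decreases it, producing the first term of \eqref{eq:full discret accelerated}.

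The heart of the argument is the extrapolated spatial error $\bar u-\bar u^{h,R}$. Here I would invoke the expansion of \cite{Gy} for the finite-difference scheme applied to the localized equation: for $m\ge 4r+4+l$,
\[
u^{h/2^i,R}_t(x)=\bar u_t(x)+\sum_{j=1}^r (h/2^i)^{2j}\,\bar u^{(j)}_t(x)+(h/2^i)^{2(r+1)}\theta^{h/2^i,R}_t(x),
\]
where the fields $\bar u^{(j)}$ are independent of the mesh and the remainder $\theta^{h/2^i,R}$ is bounded in $W^{l}_{2}$ uniformly in $h$ and $R$, the uniformity in $R$ holding because the cut-off coefficients satisfy Assumptions \ref{as2:parabolicity}--\ref{assumption holder} with constants independent of $R$. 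Since $(h/2^i)^{2j}=h^{2j}4^{-ij}$, summing against $c_i$ and using $\sum_{i=0}^r c_i 4^{-ij}=\delta_{j0}$ (which is exactly the defining relation $(c_0,\dots,c_r)V=(1,0,\dots,0)$ from \eqref{c}) cancels every term with $1\le j\le r$ and reconstructs $\bar u$ from the $j=0$ term; only the remainder, of order $h^{2(r+1)}$, survives. A discrete Sobolev inequality (cf. Remark \ref{remark modification}), valid for $l>d/2$, then upgrades this to the sup-over-grid bound $\E\max_{i,x}|\bar u-\bar u^{h,R}|^2\le N_2 h^{2(2r+2)}(1+\E\cK_m^2)$. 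I expect the main obstacle to be verifying that the remainder $\theta^{h/2^i,R}$ is controlled uniformly in $R$ — that the radius enters only through the benign factor $\zeta_R$ and not through the constants in the regularity estimates underlying \cite{Gy}.

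Finally, the temporal term is handled by Theorem \ref{theorem time error}, applied to each localized finite-difference system: it yields an implicit-Euler error of order $\tau$ in the squared $L_2$-norm, uniformly in the spatial mesh and in $R$, with Assumption \ref{assumption holder} supplying the time-regularity of the data (hence the dependence of $N_2$ on $H$). Summing over the $r+1$ indices with the fixed weights $c_i$ and applying the discrete Sobolev inequality as above contributes $N_2\tau(1+\E\cK_m^2)$. Collecting the three estimates and their constants yields \eqref{eq:full discret accelerated}; the case $r=0$, where $c_0=1$ and no cancellation occurs, recovers Theorem \ref{theorem main1}.
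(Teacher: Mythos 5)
Your proposal is correct and follows essentially the same route as the paper: your $\bar u$ is exactly the paper's intermediate solution $u^{0,R}$ with data $\frD^R=(\zeta_R\psi,\zeta_R^2a,\zeta_Rb,\zeta_Rc,\zeta_R\sigma,\zeta_R\mu,\zeta_Rf,\zeta_Rg)$, and your three-term decomposition is treated by the same three ingredients the paper uses, namely Theorem \ref{theorem cutoff} for the localization error, Theorem \ref{theorem Gy} (c) (whose proof in \cite{Gy} is precisely the error expansion and Vandermonde cancellation you spell out, with constants independent of $R$ because the cut-off coefficients satisfy the assumptions with constants depending only on $K$ and $C$) for the extrapolated spatial error, and Theorem \ref{theorem time error} together with the discrete Sobolev embedding (i.e.\ Corollary \ref{corollary time error} (iii)) for the temporal error. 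The only cosmetic difference is that you re-derive the content of the cited extrapolation result rather than invoking it directly, which changes nothing in substance.
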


These theorems will be proved by using Theorem 
\ref{theorem cutoff}, some results from \cite{Gy}, summarized 
below in Theorem \ref{theorem Gy}, and the error estimate for the 
time-discretization, established in Theorem \ref{theorem time error} below.

\begin{example}
Consider the equation
$$
du_t(x)=\sin^2(x)D^2 u_t(x)dt+\sin(x)D u_t(x)dw_t
$$
for $(t,x)\in[0,1]\times\R$, where $(w_t)_{t\in[0,1]}$ 
is a 1-dimensional Wiener process, with the initial condition
$$
u_0(x)=(1+x^2)^{-1}.
$$
The choice of localizing function $\zeta_{R}$ is quite arbitrary, 
for the sake of concreteness we take
$
\zeta_R(x):=f(x+2+R)-f(x-2-R),\,
$
where
$$
f(x):=\frac{2}{\pi}\arctan e^{x/(1-x^2)}\quad \text{for $|x|<1$, and 
$f(x):={\mathbf 1}_{[1,\infty)}(x)$ for $|x|\geq1$},
$$
while noting that in practice a simple mollified indicator of $[-R,R]$ 
may be more favourable. Notice that $\zeta_R(x)=1$ for $|x|\leq R$ and 
$\rm{supp}\,\zeta_R=[-3-R,3+R]$.

For an integers $j\geq1$ and $n\geq1$ we set $h=R/(10j)$ and $\tau=1/n$. 
To use the extrapolation with $r=1$ in \eqref{extrapol}, we need to solve 
two discrete equations with spatial mesh sizes $\bar h=h,h/2$, 
and mix them according to \eqref{extrapol}, 
where one can check that the coefficients  are $c_0=-1/3, c_1=4/3$. 
Following the steps outlined above, the discrete equation we arrive at is
$$
u^{R,\bar h,\tau}_{i}(k\bar h)=\fra(k\bar h)(\delta^{\bar h}\delta^{\bar h}u^{R,\bar h}_i)(k\bar h)\tau+\frb(k\bar h)(\delta^{\bar h}u^{R,\bar h}_{i-1})(k\bar h)(w_{\tau i}-w_{\tau(i-1)})
$$
for $i=1,2,\ldots,n$ and 
$k=0, \pm1,\ldots, \pm \lceil(3+R)/\bar h\rceil$, 
with the initial values
$$
u^{R,\bar h,\tau}_{0}(k\bar h)=(1+k^2\bar h^2)^{-1}\zeta_R(k\bar h),
$$
where $\fra(x)=\zeta^2_{R}(x)\sin^2(x)$, 
$\frb(x)=\zeta_{R}(x)\sin(x)$, and 
$$
\delta^{\bar h}\phi(x)=(2\bar h)^{-1}[\phi(x+\bar h)-2\phi(x)+\phi(x-\bar h)].
$$
For each $\bar h$ one can solve the above equation recursively in $i$. 

Taking the resulting solutions $u^{R,h,\tau}$ 
and $u^{R,h/2,\tau}$, and setting 
$$
v^{R,h,\tau}=(4/3)u^{R,h/2,\tau}-(1/3)u^{R,h,\tau},
$$ 
we can conclude that the error 
$\E\max_{i,k}|u_{i\tau}(kh)-v_i^{R,h,\tau}(kh)|^2$, 
where $i$ runs over $0,1,\ldots, n$ 
and $k$ runs over $0\pm1,\ldots,\pm0.9R/h$, 
is of order $e^{-\delta R^2}+h^8+\tau$.
\end{example}

Before summarising some results 
from \cite{Gy} on finite difference operators $L^h$, $M^h$ and stochastic 
finite difference schemes in spatial variables we need to make 
an important remark.

\begin{remark}                                                   \label{remark important}
The concept of a solution of \eqref{eq:finite diff}-\eqref{eq:finite diff-ini}, 
as a process with values in $l_{2,h}$,  
the space of functions $\phi:\G_h\rightarrow\R$ 
with finite norm $\|\phi\|_{l_{2,h}}^2=\sum_{x\in\G_h}|\phi(x)|^2$, is straightforward. 
One can, however, also consider \eqref{eq:finite diff}-\eqref{eq:finite diff-ini} 
on the whole space, that is, for $(t,x)\in H_T$. 
 Namely, when  Assumptions 
\ref{as2:regularity} and \ref{assumption free} hold, 
then we can look for an $\cF_t$-adapted 
$L_2$-valued solution $(u_t^h)_{t\in[0,T]}$ such that 
almost surely for every $t\in[0,T]$
\begin{equation}                                     \label{SDE2}
u_t^h=\psi+\int_0^t(L^h_su_s^h+f_s)\,ds
+\sum_{k=1}^{\infty}\int_0^t(M^{h,k}_su_s^h+g^{k}_s)\,dw^k_s
\end{equation}
in the Hilbert space $L_2$, where the first integral is understood 
as Bochner integral of $L_2$-valued 
functions, the stochastic integrals are understood as 
It\^o integrals of $L_2$-valued processes, 
and the convergence of their infinite sum is understood in probability, 
uniformly in $t\in[0,T]$. 
Thus, by a well-known theorem on SDEs in Hilbert spaces 
with Lipschitz continuous 
coefficients,  equation \eqref{SDE2} has a unique 
$L_2$-valued continuous $\cF_t$-adapted solution $u^h=(u^h_t)_{t\in[0,T]}$.  
We refer to such a solution as an {\it $L_2$-valued solution} to 
\eqref{eq:finite diff}-\eqref{eq:finite diff-ini}.  
We can view equation \eqref{SDE2} 
also as an SDE in the Hilbert space $W^m_2$, and by the same theorem on 
existence and uniqueness of solution to SDEs in Hilbert spaces, we get a unique 
$W^m_2$-valued continuous $\cF_t$-adapted solution to it. 
Consequently, 
if Assumptions \ref{as2:regularity} and \ref{assumption free} 
hold with $m\geq0$, then 
$u=(u^h_t)_{t\in[0,T]}$, the $L_2$-valued solution to 
\eqref{eq:finite diff}-\eqref{eq:finite diff-ini} is a 
$W^m_2$-valued continuous $\cF_t$-adapted 
process. Also, it is straightforward to see that these 
two concepts of solutions are ``compatible'' in the sense 
that if $m>d/2$, then the restriction of the $L_2$-valued solution 
to $\G_h$ solves \eqref{eq:finite diff}-\eqref{eq:finite diff-ini} 
as an $l_{2,h}$-valued process. 
Note that \eqref{finite}-\eqref{finite ini} is a 
special case of the class of equations 
of the form \eqref{eq:finite diff}-\eqref{eq:finite diff-ini}, 
and so the above discussion applies to it as well.

The analogous concepts will be used for solutions 
of \eqref{eq:discrete time-ini}-\eqref{eq:discrete time}.
Namely, a sequence of $L_2$-valued random variables 
$(v^{h,R}_i)_{i=0}^n$ is called 
an $L_2$-valued solution to 
\eqref{eq:discrete time-ini}-\eqref{eq:discrete time} if $v^{h,R}_i$ 
is $\cF_{i\tau}$-measurable for $i=0,1,2,...,n$, 
and the equalities hold for almost all $x\in\bR^d$, for 
almost all $\omega\in\Omega$. If 
$(v^{h,R}_i)_{i=0}^n$ is  
an $L_2$-valued solution to 
\eqref{eq:discrete time-ini}-\eqref{eq:discrete time} such that 
$v^{h,R}_i\in W^m_2$ (a.s.) for $m>d/2$ for each $i$, then it is easy to 
see that the restriction of the continuous version of $v^{h,R}_i$ to 
$\G_h\cap\rm{supp}\,\zeta_R$ for each $i$ gives a solution 
$\{v_i(x): x\in\G_h\cap\rm{supp}\,\zeta_R, i=0,1,...,n\}$ to 
\eqref{eq:discrete time-ini}-\eqref{eq:discrete time}. 
\end{remark}

\begin{theorem}                                                                 \label{theorem Gy}
Let Assumptions \ref{as2:parabolicity}, 
\ref{as2:regularity} and \ref{assumption free} 
hold with an integer $m\geq0$. Then
\begin{enumerate}[(a)]
\item For any $\phi\in W^m_p$ and $|\gamma|\leq m$ 
$$
2(D^{\gamma}\phi,D^{\gamma}{L}^h_t\phi)
+\sum_k\|D^{\gamma}{M}^{h,k}_t\phi\|^2\leq N\|\phi\|_m^2
$$
for all $\omega\in\Omega$ and $t\in[0,T]$
with a constant $N=N(K,m,d, \Lambda_0)$;
\item There is a unique $L_2$-valued  
solution ${u}^h$ of \eqref{eq:finite diff}-\eqref{eq:finite diff-ini}. It is 
a $W^m_2$-valued process with probability one, and for any $q>0$
\begin{equation*}                                              \label{sup}
\E \sup_{t\leq T}\|{u}^h_t\|_m^q\leq N\E\cK^q_m
\end{equation*}
with a constant $N=N(K,m,d,\Lambda_0,q,T)$;
\item If for an integer $r\geq0$ 
Assumptions \ref{as2:compatibility} through \ref{assumption free} 
with $m>4r+4+d/2$ hold, then  for any $q>0$
$$
\E \sup_{t\leq T}\max_{x\in\G_h}|u_t(x)-\bar u^h(x)|^q\leq 
Nh^{q(2r+2)}\E\cK^q_m,
$$
where $u$ is the classical solution to 
\eqref{SPDE general}-\eqref{SPDE general-ini}, 
$\bar u^h=\sum_{i=0}^ru^{h/2^i}$, and $N$ 
is a constant depending only on 
$K$, $d$, $T$, $q$, $\Lambda_0$ and $m$.
\end{enumerate}
\end{theorem}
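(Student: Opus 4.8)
The plan is to prove the three parts in order, treating the energy inequality (a) as the engine that drives (b) and (c). The structural key for (a) is that each $\delta^h_\lambda$ is skew-symmetric on $L_2$, so that $(\delta^h_\lambda\phi,\psi)=-(\phi,\delta^h_\lambda\psi)$, and that it obeys a discrete Leibniz rule expressed through the shifts $\phi(x)\mapsto\phi(x\pm h\lambda)$. I would first prove the case $\gamma=0$: writing out $2(\phi,L^h_t\phi)+\sum_k\|M^{h,k}_t\phi\|^2$ and moving the outer difference operator onto the other factor by skew-symmetry, the principal part assembles, up to terms in which a difference falls on a coefficient, into $-\int\sum_{\lambda,\kappa\in\Lambda_0}(2\fra^{\lambda\kappa}-\frb^{\lambda,k}\frb^{\kappa,k})\,\delta^h_\lambda\phi\,\delta^h_\kappa\phi\,dx$. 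Setting $z^\lambda=\delta^h_\lambda\phi(x)$ pointwise, Assumption \ref{as2:parabolicity} makes this integrand nonnegative, so the whole principal term is $\leq 0$ and may simply be discarded for an upper bound; the coefficient-derivative remainders, the shift corrections from the discrete Leibniz rule, and all interactions involving $\delta^h_0=I$ are lower order and are absorbed by a Young/Cauchy--Schwarz splitting using only the uniform bound $K$. For $|\gamma|\leq m$ I would commute $D^\gamma$ past $L^h_t$ and $M^{h,k}_t$; since $\delta^h_\lambda$ acts by translation it commutes with $D^\gamma$, so the only commutators hit the coefficients and are controlled by Assumption \ref{as2:regularity}. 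Summing over $|\gamma|\leq m$ then yields (a) with $N$ independent of $h$.

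For a fixed $h$ the operators $L^h_t$ and $M^{h,k}_t$ are bounded on $L_2$ and on each $W^m_2$, so existence and uniqueness of the $W^m_2$-valued solution, as well as the compatibility statements, follow from the standard theory of linear SDEs in Hilbert spaces, exactly as in Remark \ref{remark important}. The only nontrivial claim in (b) is the $h$-independent moment bound, which I would obtain by applying It\^o's formula to $\|u^h_t\|_m^2=\sum_{|\gamma|\leq m}\|D^\gamma u^h_t\|^2$: the drift is dominated by $N\|u^h_t\|_m^2$ through (a), the bracket of the martingale part is bounded by the same quantity (again (a)), so the Burkholder--Davis--Gundy inequality followed by Gronwall's lemma gives $\E\sup_{t\leq T}\|u^h_t\|_m^q\leq N\E\cK^q_m$ with $N$ not depending on $h$.

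The substance of the theorem is part (c), and this is where I expect the main difficulty. The idea is that $\delta^h_\lambda$ is a \emph{symmetric} difference, so its Taylor expansion contains only odd powers of the displacement $h\lambda$, whence $\delta^h_\lambda$ expands in \emph{even} powers of $h$ with the $h^{2j}$ term proportional to $\partial_\lambda^{2j+1}$, where $\partial_\lambda=\lambda^iD_i$; consequently $L^h$ and $M^{h,k}$ expand in even powers of $h$, $L^h=L+h^2L^{(1)}+\cdots+h^{2r}L^{(r)}+O(h^{2r+2})$ with $L^{(j)}$ of order at most $2(j+1)$, and similarly for $M^{h,k}$. One then posits $u^h=\sum_{j=0}^r h^{2j}u^{(j)}+h^{2r+2}R^h$ with $u^{(0)}=u$ the true solution, where each $u^{(j)}$ solves an SPDE of the same principal type with source terms built from the lower $u^{(i)}$ through $L^{(i)},M^{(i),k}$. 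Each such problem is well-posed in a lower $W_2$-scale by Theorem \ref{theorem ex}, and this is precisely why $m>4r+4+d/2$ is required: it is exactly the budget needed for the top correction $u^{(r)}$ and the remainder to live in a space with more than $d/2$ derivatives. The Vandermonde weights $c_i$ of \eqref{c}, built from powers of $4=2^2$, are the unique ones annihilating $h^2,h^4,\ldots,h^{2r}$ in $\bar u^h=\sum_i c_iu^{h/2^i}$, so that $\bar u^h-u=h^{2r+2}\widetilde R^h$.

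It remains to bound $\widetilde R^h$ uniformly in $h$, and this is the hard step. The remainder itself solves a \emph{finite-difference} SPDE of the form \eqref{eq:finite diff}, whose free data are the consistency error $h^{-(2r+2)}(L^h-\sum_{j\leq r}h^{2j}L^{(j)})(\cdots)$ together with the analogous stochastic terms; by the construction of the expansion these are $O(1)$ in $h$ and are bounded in $W^{m'}_2$ with $m'>d/2$ by $\cK_m$. Applying the uniform energy estimate of (a) and the moment bound of (b) to this remainder equation yields $\E\sup_{t\leq T}\|\widetilde R^h_t\|_{m'}^q\leq N\E\cK^q_m$, and the Sobolev embedding $W^{m'}_2\hookrightarrow C_b$ converts this into the pointwise maximum over $\G_h$. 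The genuinely delicate point throughout is the consistency analysis that simultaneously produces the correct correction equations and a remainder carrying $h$-independent bounds; this is the step in which the smoothness budget is sharp, and it is what makes the uniform coercivity of (a) indispensable.
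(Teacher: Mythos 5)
You should first note that the paper contains no proof of this statement at all: Theorem \ref{theorem Gy} is explicitly a summary of results imported from \cite{Gy} (with part (a) in the spirit of \cite{GK2010}), so there is no internal argument to compare against. Judged on its own, your sketch reconstructs the strategy of those cited works essentially correctly: skew-symmetry of the central difference $\delta^h_\lambda$ plus the discrete parabolicity Assumption \ref{as2:parabolicity} applied pointwise to $z^\lambda=\delta^h_\lambda\phi(x)$ kills the principal part in (a); It\^o's formula for $\|u^h_t\|_m^2$ with (a), BDG and Gronwall gives the $h$-independent bound in (b); and for (c) the even-power expansion of the symmetric difference, the correction SPDEs for $u^{(j)}$, the Vandermonde weights annihilating $h^2,\dots,h^{2r}$, and the remainder bounded via (a)--(b) and Sobolev embedding ($m'>d/2$) is exactly the accepted route. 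You also silently correct two typos in the statement (the displayed $\delta^h_\lambda$ should be the central first difference $\tfrac{1}{2h}(\varphi(x+h\lambda)-\varphi(x-h\lambda))$, and $\bar u^h$ should be $\sum_i c_i u^{h/2^i}$), which is the right reading.

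There is, however, one concrete soft spot in your part (a): the claim that the shift corrections and coefficient-difference remainders ``are lower order and are absorbed by a Young/Cauchy--Schwarz splitting using only the uniform bound $K$'' would fail if executed literally. Each $\delta^h_\lambda$ costs a factor $h^{-1}$ in $L_2$, so a typical remainder such as $\bigl((\delta^h_\lambda\fra^{\lambda\kappa})\,\phi,\delta^h_\kappa\phi\bigr)$ is only bounded by $Nh^{-1}\|\phi\|^2$ via Cauchy--Schwarz, which is useless as $h\to0$; note that for $m=0$ the target bound is $N\|\phi\|_0^2$, so you cannot pay with a derivative of $\phi$ either. The cure is a second summation by parts: moving $\delta^h_\kappa$ back onto the product makes the same term reappear with opposite sign (a self-cancellation), leaving a term of the form $\bigl((\delta^h_\kappa\delta^h_\lambda\fra^{\lambda\kappa})\,\tilde\phi,\phi\bigr)$ with harmless shifts $\tilde\phi$, and $|\delta^h_\kappa\delta^h_\lambda\fra^{\lambda\kappa}|\le N$ precisely because Assumption \ref{as2:regularity} demands bounded derivatives of $\fra$ up to order $\max(m,2)$ --- i.e.\ two derivatives even when $m=0$. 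Your sketch never uses this second-derivative hypothesis, which is the telltale sign the cancellation step is missing. With that step supplied, (a) goes through and the rest of your architecture for (b) and (c) is sound, though in (c) the consistency analysis you defer (deriving the correction equations and checking the remainder's data are $O(1)$ in $W^{m'}_2$, which is where the budget $m>4r+4+d/2$ is spent) is the bulk of the work in \cite{Gy} and is asserted rather than proved here.
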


As discussed above, under  Assumptions 
\ref{as2:regularity} and \ref{assumption free} we have  
a unique $L_2$-valued 
solution $u^{h,R}=(u^{h,R}_t)_{t\in[0,T]}$ to \eqref{finite}-\eqref{finite ini}, 
and it is a continuous $W^m_2$-valued process.

\begin{theorem}                                                       \label{theorem time error}
(i) Let Assumptions \ref{assumption cutoff} through 
 \ref{assumption free} hold with $m\geq0$.  
Then for sufficiently small $\tau$ there exists for all $h$ and $R>0$ 
a unique 
$L_2$-valued solution $v^{h,R,\tau}$ to 
\eqref{eq:discrete time-ini}-\eqref{eq:discrete time} such that 
$v_i^{h,R,\tau}\in W^m_2$ for every $i=0,1,...,n$ and 
$\omega\in\Omega$. 
\newline
(ii) If Assumption \ref{assumption holder} 
holds with some integer $l\geq0$ and Assumptions \ref{as2:compatibility}   
through 
 \ref{assumption free}
hold with $m=l+3$, then 
\begin{equation}                                 \label{gyenge becsles}
\max_{i\leq n}\E \|u^{h,R}_{\tau i}-v^{h,R,\tau}_i\|_l^2
\leq N\tau(1+\E\cK_{m}^{2}),
\end{equation}
with a constant 
$N=N(K,C,H,d,T,l,\Lambda_0)$.  

\noindent
(iii) Let $l\geq 0$ be an integer. If  Assumption \ref{assumption holder} 
holds with $l+1$ in place of $l$, and Assumptions \ref{as2:compatibility}   
through 
 \ref{assumption free}
hold with $m=l+4$, then 
\begin{equation} 						\label{erosebb becsles}
\E \max_{i\leq n}\|u^{h,R}_{\tau i}-v^{h,R,\tau}_i\|_l^2
\leq N\tau(1+\E\cK_{m}^{2})
\end{equation}
with a constant 
$N=N(K,C,H,d,T,l,\Lambda_0)$.  
\end{theorem}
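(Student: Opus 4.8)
The plan is to prove Theorem \ref{theorem time error} by treating the implicit Euler scheme as a perturbation of the continuous-time finite difference equation \eqref{SDE2}, working entirely in the Hilbert space $W^m_2$ so that the estimates are automatically independent of the spatial mesh $h$ and the localization radius $R$. The key structural fact I would exploit is part (a) of Theorem \ref{theorem Gy}, which gives the coercivity-type bound $2(D^\gamma\phi,D^\gamma L^{h}_t\phi)+\sum_k\|D^\gamma M^{h,k}_t\phi\|^2\leq N\|\phi\|_m^2$ with a constant depending only on $K,m,d,\Lambda_0$; since the cut-off coefficients $\fra^{\lambda\kappa,R},\frb^{\lambda,R,k}$ in \eqref{coeffcutoff}-\eqref{coeffcutoff2} satisfy the same bound \eqref{bounded} uniformly in $R$, this monotonicity carries over to $L^{h,R},M^{h,R,k}$ with the same $N$. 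This is what makes the whole discrete $L_2$-theory uniform in $h$ and $R$.

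For part (i), first I would establish solvability and the a priori bound. Writing the implicit step \eqref{eq:discrete time} as $(I-\tau L^{h,R}_{\tau i})v_i = v_{i-1}+\tau f^R_{\tau(i-1)}+\sum_k(M^{h,R,k}_{\tau(i-1)}v_{i-1}+g^{R,k}_{\tau(i-1)})\xi^k_i$, the operator $I-\tau L^{h,R}_{\tau i}$ is invertible on $W^m_2$ for $\tau$ small because the coercivity from Theorem \ref{theorem Gy}(a) makes $I-\tau L^{h,R}$ boundedly invertible with norm controlled uniformly in $h,R$. Solving recursively gives a unique $W^m_2$-valued $(v_i)$ that is $\cF_{i\tau}$-measurable by construction. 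The discrete energy estimate follows by applying $D^\gamma$, taking the $W^m_2$ inner product of each step with $v_i$, using $2(v_i,v_i-v_{i-1})=\|v_i\|^2-\|v_{i-1}\|^2+\|v_i-v_{i-1}\|^2$, invoking part (a) to absorb the implicit drift term, estimating the martingale increments via their conditional expectation $\E(|\xi^k_i|^2|\cF_{(i-1)\tau})=\tau$, and closing with a discrete Gronwall inequality.

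For parts (ii) and (iii) I would set $e_i=u^{h,R}_{\tau i}-v^{h,R,\tau}_i$ and derive the equation it satisfies by subtracting the discretized scheme from the exact increments $u^{h,R}_{\tau i}-u^{h,R}_{\tau(i-1)}=\int_{\tau(i-1)}^{\tau i}(L^{h,R}_s u^{h,R}_s+f^R_s)\,ds+\sum_k\int_{\tau(i-1)}^{\tau i}(M^{h,R,k}_s u^{h,R}_s+g^{R,k}_s)\,dw^k_s$. The resulting error recursion has a ``principal'' part, governed by $L^{h,R},M^{h,R,k}$ acting on $e_i$ (to be controlled again by the coercivity of Theorem \ref{theorem Gy}(a) and discrete Gronwall), plus consistency residuals measuring how far the one-step map is from the true increment. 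These residuals split into: the time-averaging error of the drift $\int_{\tau(i-1)}^{\tau i}(L^{h,R}_s u^{h,R}_s-L^{h,R}_{\tau i}u^{h,R}_{\tau i})\,ds$ and of $f^R$, the frozen-coefficient error in the diffusion term, and the martingale part $\sum_k\int_{\tau(i-1)}^{\tau i}(M^{h,R,k}_s u^{h,R}_s-M^{h,R,k}_{\tau(i-1)}u^{h,R}_{\tau(i-1)})\,dw^k_s$. The H\"older-in-time control of the coefficients and data in Assumption \ref{assumption holder}, together with the temporal regularity of $u^{h,R}$ (which itself follows from the $W^m_2$-estimate in Theorem \ref{theorem Gy}(b) applied at regularity $m=l+3$ or $l+4$), lets me bound each residual by $N\tau(1+\E\cK^2_m)$ per step. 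The difference between (ii) and (iii) is the order of summation and supremum: for \eqref{gyenge becsles} one takes $\max_i\E\|e_i\|_l^2$, so the martingale residual is handled by its conditional variance and the drift residuals by Cauchy--Schwarz, needing only $m=l+3$; for the stronger \eqref{erosebb becsles} with $\E\max_i\|e_i\|_l^2$ one must estimate the maximum of the accumulated martingale, which I would do with the Burkholder--Davis--Gundy inequality, at the cost of one extra derivative, hence $m=l+4$.

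The main obstacle I anticipate is obtaining the drift consistency bound at the \emph{sharp} power $\tau$ (rather than $\tau^{1/2}$) uniformly in $h$. Because the implicit scheme evaluates the drift at the endpoint $\tau i$ while the diffusion is explicit at $\tau(i-1)$, a naive estimate of $\int_{\tau(i-1)}^{\tau i}(L^{h,R}_s u^{h,R}_s-L^{h,R}_{\tau i}u^{h,R}_{\tau i})\,ds$ would only give $\tau^{1/2}$ in $\E\|\cdot\|^2$. To recover the full order $\tau$ I would need to integrate by parts in time / use the martingale structure of $u^{h,R}_s-u^{h,R}_{\tau i}$ so that its stochastic-integral part contributes a centered increment whose square-expectation is $O(\tau)$ after summation, while the bounded-variation drift part contributes $O(\tau^2)$ per step. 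Carrying this out requires that $L^{h,R}$ commute well with the derivative estimates and that the difference operators $\delta^h_\lambda$ be bounded on $W^m_2\to W^{m-2}_2$ uniformly in $h$ (a standard consequence of Assumption \ref{as2:regularity} and the structure of $\Lambda_0$); making these uniform-in-$h$ bounds precise, so that no hidden factor of $h^{-1}$ creeps in through the finite differences, is the delicate technical core of the argument.
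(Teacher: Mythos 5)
Your overall architecture coincides with the paper's: solvability of the implicit step via the coercivity in Theorem \ref{theorem Gy} (a), which is indeed uniform in $h$ and $R$ (the paper makes the invertibility of $I-\tau L^{h,R}_{\tau i}$ precise via the Lax--Milgram lemma applied simultaneously at the levels $\kappa=0$ and $\kappa=m$); an error recursion for $e_i=u^{h,R}_{\tau i}-v^{h,R,\tau}_i$ with consistency residuals $F,G$; discrete Gronwall for (ii); and the Davis inequality for the maximum in (iii) at the cost of one extra derivative. However, your ``main obstacle'' paragraph misdiagnoses the technical core. The theorem asserts $\E\|e_i\|_l^2\leq N\tau$, i.e.\ strong order $1/2$ in $\tau$, not order $1$. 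The ``naive'' estimate you dismiss is exactly sufficient: Assumption \ref{assumption holder} gives $\E\|f^R_t-f^R_{\kappa_1(t)}\|_l^2\leq N\tau$ and handles the frozen-coefficient part, while the standard mean-square time regularity $\E\|u^{h,R}_t-u^{h,R}_s\|^2_{l+2}\leq N|t-s|(1+\E\cK^2_{l+3})$ (martingale part $O(|t-s|)$ by the It\^o isometry, Bochner part $O(|t-s|^2)$) yields $\E\|F_t\|_l^2+\sum_k\E\|G^k_t\|^2_{l+1}\leq N\tau(1+\E\cK^2_{l+3})$, which after Gronwall is precisely \eqref{gyenge becsles}. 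The time-integration-by-parts/martingale-centering machinery you propose is what a first-order (Milstein-type) analysis would require and solves a problem the theorem does not pose.

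The two genuinely delicate points lie elsewhere, and your plan as written does not close them at the stated regularity. First, proving the time-regularity estimate \eqref{lipschitz} in $W^{l+2}_2$ with only $m=l+3$: the drift contribution to $u^{h,R}_t-u^{h,R}_s$ measured in $\|\cdot\|_{l+2}$ naively requires $\|u^{h,R}\|_{l+4}$, since $L^{h,R}$ costs two (uniformly bounded) difference-derivatives; your appeal to Theorem \ref{theorem Gy} (b) ``at regularity $m=l+3$'' therefore proves part (ii) only under $m=l+4$. The paper's device is spatial mollification: by \eqref{hasznos1}--\eqref{hasznos2} the missing derivative is bought at price $\varepsilon^{-1}$ only on the Bochner integral, which is $O(|t-s|)$ in norm, and the choice $\varepsilon=|t-s|^{1/2}$ balances $\varepsilon^2+(t-s)^2\varepsilon^{-2}+|t-s|$. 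Second, the cross term $\E I^{(5)}=2\E(D^{\gamma}\frM^k_{i-1}e_{i-1}\xi^k_i,\frG_i)$ between the explicit diffusion increment and the residual stochastic integral: direct Cauchy--Schwarz produces $\tau\E\|e_{i-1}\|^2_{l+1}$, one derivative more than the Gronwall quantity controls in (ii). The paper integrates by parts in $x$ to shift that derivative onto $G^k$, obtaining $\tau N\E\|e_{i-1}\|^2_{l}+N\E\int\sum_k\|G^k_s\|^2_{l+1}\,ds$ --- this is exactly why $g$ is assumed at level $l+2$ and why $m=l+3$ suffices; in part (iii), where $m=l+4$, the crude bound $I^{(5)}\leq I^{(4)}+I^{(6)}$ can be used instead because $\tau\sum_i\E\|e_i\|^2_{l+1}$ is already controlled by part (ii) applied with $l+1$ in place of $l$. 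Without these two devices your argument is sound but proves the estimates only with one derivative more than the theorem hypothesizes.
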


\begin{proof} 
To prove solvability of the system of equations 
\eqref{eq:discrete time-ini}-\eqref{eq:discrete time} we rewrite 
\eqref{eq:discrete time} in the form 
\begin{equation}                                                    \label{inverse}
(I-\tau L^{h,R}_{\tau i})v_i=v_{i-1}+\tau f_{\tau(i-1)}
+\sum_{k=1}^{\infty}
(M^{h,R,k}_{\tau(i-1)}v_{i-1}+g^{R,k}_{\tau(i-1)})\xi_{i}^k, 
\quad i=1,2,\ldots,n,
\end{equation}
where $I$ denotes the identity operator. We are going to show by induction on $j\leq n$ 
that for sufficiently small $\tau$  for each $j$ there is a sequence of 
$W^m_2$-valued random variables 
$(v_i)_{i=0}^{j}$, such that $v^i$ is $\cF_{i\tau}$-measurable, 
$v_0=\psi^{R}$ and 
\eqref{inverse} holds for $1\leq i\leq j$. For $j=0$ there is nothing to prove. 
Let $j\geq1$ and assume that our statement holds for $j-1$. Consider the equation 
\begin{equation}                         \label{elliptic}
\bD v=X, 
\end{equation}
where 
$$
\bD:=I-\tau L^{h,R}_{\tau j}, \quad 
X:=v_{j-1}+\tau f_{\tau(j-1)}
+\sum_{k=1}^{\infty}
(M^{h,R,k}_{\tau(j-1)}v_{i-1}+g^{R,k}_{\tau(j-1)})\xi_{j}^k.  
$$
In the following we take $\kappa$ to be either $0$ or $m$. It easy to see that $\bD$ is a bounded linear operator from $W^\kappa_2$ into 
$W^\kappa_2$, 
for each  $\omega\in\Omega$ and $\tau$. 
Let us define the norm $|\cdot|_\kappa$ in $W^\kappa_2$ by 
$$
|\varphi|^2_\kappa=\|(I-\Delta)^{\kappa/2}\phi\|^2
=\sum_{\gamma:|\gamma|\leq\kappa}C_{\gamma}\|D^{\gamma}\phi\|^2, \quad 
\Delta:=\sum_{i=1}^dD_i^2,
$$
where 
$C_{\gamma}$ is a positive integer for 
each multi-index $\gamma$, $|\gamma|\leq \kappa$. 
Thus $|\cdot|_\kappa$ is a Hilbert norm which is equivalent to 
to $\|\cdot\|_\kappa$. 
We denote the corresponding inner product in $W^\kappa_2$  
by $(\,,\,)_\kappa$. 
By virtue of Theorem \ref{theorem Gy} (a)  for all 
$\omega\in\Omega$ and $\tau$ we have 
\begin{equation*}                                                      \label{coercive}
(v,Dv)_\kappa=|v|_\kappa-\tau(L^{h,R}_{\tau j}v,v)_\kappa\geq |v|^2_\kappa-\tau N|v|^2_\kappa, 
\quad\text{for all $v\in W^\kappa_2$}
\end{equation*}
where the dependence of $N$ is as in the theorem, in particular, it is independent of $h,R$. Consequently,
 for $\tau<1/N$ we have 
$$
(v,\bD v)_\kappa\geq\delta|v|_\kappa^2\quad\text{for all $v\in W^\kappa_2$}, 
\quad\omega\in\Omega,
$$
where $\delta=1-\tau N>0$. Hence by the Lax-Milgram lemma for every $\omega\in\Omega$ there is 
a unique $v=v_\kappa\in W^\kappa_2$ such that 
$$
(\bD v_\kappa,\varphi)_\kappa=(X,\varphi)_\kappa\quad \text{for all $\varphi\in C_0^{\infty}(\bR^d)$}. 
$$
Since $(Y,\varphi)_\kappa=(Y,(I-\Delta)^\kappa\varphi)$ 
for all $Y\in W^\kappa_2$ and $\varphi\in C_0^{\infty}(\bR^d)$, we have 
$$
(\bD v_\kappa,(I-\Delta)^\kappa\varphi)=(X,(I-\Delta)^\kappa\varphi)
\quad \text{for all $\varphi\in C_0^{\infty}(\bR^d)$}.
$$
Hence, taking into account that $\{(I-\Delta)^m\varphi):\varphi\in C_0^{\infty}(\bR^d)\}$ 
is dense in $W^0_2=L_2$, we get that $v_m$ solves \eqref{elliptic} in $L_2$ as well, so by uniqueness, $v_m=v_0$. This means 
\eqref{elliptic} has a unique solution $v\in L_2$ for every $\omega\in\Omega$, and 
$v\in W^m_2$ for every $\omega\in\Omega$. Since $X$ and $\bD \phi$ are 
 $W^m_2$-valued 
$\cF_{j\tau}$-measurable random variables for every $\varphi\in W^m_2$, the unique 
solution $v\in W^m_2$ to \eqref{elliptic} is also $\cF_{j\tau}$-measurable. 
This finishes the induction, and the proof of statement (i) of the theorem.

For parts (ii) and (iii) $\E\cK_{m}^{2}<\infty$ may and will be assumed. 
Hence by Theorem \ref{theorem Gy} (b), 
 \begin{equation}                                                 \label{veges2}
 \E\sup_{t\in[0,T]}|u^{h,R}_t|^{2}_{\varrho}\leq N \E\cK^2_{\varrho}<\infty 
 \quad \text{for $\varrho=0,1,2,...,m$}.  
 \end{equation}
As Assumption \ref{assumption holder} also holds, we have 
 \begin{equation}                                                          \label{condition}
\E\|\psi\|_{l+\kappa}^2+\max_{i\leq n}\E\|f_{t_i}\|_{l+\kappa}^2
+\max_{i\leq n}\sum_k\E|g^{k}_{t_i}\|^2_{l+\kappa}<\infty
\end{equation}
with $\kappa=0$ in part (ii) and $\kappa=1$ in part (iii), and hence, with the same $\kappa$,
\begin{equation}                                                      \label{veges1}
\E\|v_i^{h,R,\tau}\|_{l+\kappa}^2<\infty \quad\text{for every $i=0,1,...,n$.}
\end{equation}

To start the proof of (ii), let us fix a multi-index $\gamma$ with length
$|\gamma|=:\varrho\leq l$. 
From equations  \eqref{eq:discrete time-ini}-\eqref{eq:discrete time} and 
\eqref{eq:finite diff}-\eqref{eq:finite diff-ini}, 
we get that the error $e_i:={u}^{h,R}_{\tau i}-{v}_i^{h,R,\tau}$ 
is a $W^m_2$-valued $\mathcal{F}_{\tau i}$-measurable random variable, 
$i=0,\ldots,n$, and $D^{\gamma}e_i$ is the $L_2$-valued solution of the equation
\begin{align*}
D^{\gamma}e_i=&D^{\gamma}e_{i-1}                                                          \nonumber\\
&+D^{\gamma}L^{h,R}_{\tau i}e_i\tau+\int_{\tau(i-1)}^{\tau i}
D^{\gamma}F_s\,ds            \nonumber\\
&+D^{\gamma}M^{h,R,k}_{\tau(i-1)}e_{i-1}\xi_{i}^k
+\int_{\tau(i-1)}^{\tau i}D^{\gamma}G^k_s\,dw^k_s                                       
\end{align*}
for $i=1,\ldots,n$, with zero initial condition, where 
$$
F_t:=L^{h,R}_tu^{h,R}_t-L^{h,R}_{\kappa_2(t)}
u^{h,R}_{\kappa_2(t)}+f^{R}_t-f^{R}_{\kappa_1(t)},
\quad
\kappa_2(t):=\kappa_2^n(t):=(\lfloor nt\rfloor+1)/n,
$$
$$
G_t^k:=M^{h,k}_t u^{h,R}_t-M^{h,R,k}_{\kappa_1(t)}u^{h,R}_{\kappa_1(t)}
+{g}^{R,k}_t- g^k_{\kappa_1(t)},
\quad 
\kappa_1(t):=\kappa_1^n(t):=\lfloor nt\rfloor/n. 
$$
To ease notation we set 
$$
\frL_i:=L^{h,R}_{\tau i},\quad \frM_i^k:=M^{h,R,k}_{\tau i},
\quad
\frF_i:=\int_{\tau(i-1)}^{\tau i}D^{\gamma}F_s\,ds,
\quad
\frG_i:=\int_{\tau(i-1)}^{\tau i}D^{\gamma}G^k_s\,dw^k_s,
$$
and by using the simple identity 
$\|b\|^2-\|a\|^2=2(b,b-a)-\|b-a\|^2$ with $b:=D^{\gamma}e_i$ and 
$a:=D^{\gamma}e_{i-1}$,
we get 
\begin{align}
\|D^{\gamma}&e_i\|^2-\|D^{\gamma}e_{i-1}\|^2                                              \nonumber\\
&=2(D^{\gamma}e_i,D^{\gamma}\frL_i e_i\tau+\frF_i)
+2(D^{\gamma}e_{i},
D^{\gamma}\frM_{i-1}^ke_{i-1}\xi_i^k+\frG_i)                                                 
-\|D^{\gamma}e_i-D^{\gamma}e_{i-1}\|^2                                                     \nonumber\\                   
&=2(D^{\gamma}e_i,D^{\gamma}\frL_i e_i\tau+\frF_i)
+2(D^{\gamma}e_{i-1},
D^{\gamma}\frM_{i-1}^ke_{i-1}\xi_i^k+\frG_i)                                                 \nonumber\\
&\ \ 
+2(D^{\gamma}e_i-D^{\gamma}e_{i-1},
D^{\gamma}\frM_{i-1}^ke_{i-1}\xi_i^k+\frG_i)
-\|D^{\gamma}e_i-D^{\gamma}e_{i-1}\|^2                                                            \nonumber\\
&=2(D^{\gamma}e_i,D^{\gamma}\frL_{i}ie_i\tau+\frF_i)         
+2(D^{\gamma}e_{i-1},
D^{\gamma}\frM_{i-1}^ke_{i-1}\xi_i^k+\frG_i)                                                    \nonumber\\
&+\|D^{\gamma}\frM_{i-1}^ke_{i-1}\xi_i^k+\frG_i\|^2                                     
-\|D^{\gamma}\frL_{i}e_i\tau+\frF_i\|^2                                                              \nonumber\\
&\leq I_i^{(1)}+I_i^{(2)}+I_i^{(3)}+I_i^{(4)}+I_i^{(5)}+I_i^{(6)}                            \label{error estimate 1}                                                                                                            
\end{align}
with 
\begin{align*}
I_i^{(1)}:=&2\tau(D^{\gamma}e_i,D^{\gamma}\frL_{i}e_i)                                          \nonumber\\
I_i^{(2)}:=&2(D^{\gamma}e_i,\frF_i)                                                                           \nonumber\\
I_i^{(3)}:=&2(D^{\gamma}e_{i-1},D^{\gamma}\frM_{i-1}^ke_{i-1}\xi_i^k+\frG_i)         \nonumber\\
I_i^{(4)}:=&\|D^{\gamma}\frM_{i-1}^ke_{i-1}\xi_i^k\|^2                                                \nonumber\\
I_i^{(5)}:=&2(D^{\gamma}\frM_{i-1}^ke_{i-1}\xi_i^k,  \frG_i)                                        \nonumber\\
I_i^{(6)}:=&\|\frG_i\|^2.                                                             \nonumber\\
\end{align*}                                                                                                               
By  the Young and Jensen  inequalities, 
and basic properties of stochastic It\^o integrals we have 
\begin{equation}                                                                                    \label{est rev0}
I_i^{(2)}\leq \tau\|D^{\gamma}e_i\|^2+\tau^{-1}\|\frF_i\|^2\leq 
\tau\|D^{\gamma}e_i\|^2 +\int_{\tau(i-1)}^{\tau i}\|F_s\|^2_{\varrho}\,ds,
\quad
\E I_i^{(3)}=0,
\end{equation}
\begin{equation}                                                          \label{2.9.2}
 \E I_i^{(4)}
=\tau\E\sum_k\|D^{\gamma}\frM_{i-1}^ke_{i-1}\|^2, 
\end{equation}
\begin{equation}                                       \label{1.9.2}
\E I_i^{(6)}=
\E\int_{\tau(i-1)}^{\tau i}\sum_k\|D^{\gamma}G^k_s\|^2\,ds\leq
\E\int_{\tau(i-1)}^{\tau i}\sum_k\|G^k_s\|^2_{\varrho}\,ds.
\end{equation}
By It\^o's identity for stochastic integrals 
$$
\E I_i^{(5)}=2\E\int_{\tau(i-1)}^{\tau i}
\int_{\bR^d}D^{\gamma}\frM_{i-1}^ke_{i-1}(x)D^{\gamma}G^k_s(x)\,dx\,ds. 
$$
Here by integration by parts we drop one derivative from 
$D^{\gamma}\frM_{i-1}^ke_{i-1}$ on the term $D^{\gamma}G^k_s$,  
and then by the Cauchy-Schwarz-Bunyakovsky and Young inequalities 
we get
$$
\E I_i^{(5)}\leq \tau N\E\|e_{i-1}\|^2_{\varrho}
+N\E\int_{\tau(i-1)}^{\tau i}\sum_k\|G^k_s\|^2_{\varrho+1}\,ds.
$$
Using \eqref {2.9.2}, by Theorem \ref{theorem Gy} (a) we have 
\begin{equation}                                                                            \label{est rev1}
\E I_i^{(1)}+\E I_i^{(4)}\leq N\tau\E\|e_i\|_{\varrho}^2.
\end{equation}
Therefore, by taking expectations and summing up 
\eqref{error estimate 1}  over $i$ from $1$ to $j\leq n$, and 
over $\gamma$ for $|\gamma|\leq l$, we get
\begin{equation}                                          \label{gronwall}
\E\|e_j\|_{l}^2\leq N_0\tau\sum_{i=1}^j\E\|e_i\|_{l}^2
+N_0\E\int_0^T(\|F_s\|^2_{l}+\sum_{k}\|G^k_s\|^2_{l+1})\,ds
\end{equation}
for $j=1,...,n$, where $N_0=N_0(K,C,\Lambda_0,l,d)$ is a constant.
Notice that due to \eqref{veges1} and \eqref{veges2}
$$
\E\|e_i\|^2_{l}<\infty\quad i=1,2,...,n, 
$$
and due to \eqref{veges2} 
and Assumptions \ref{as2:regularity} and \ref{assumption holder} 
we have 
$$
\E\int_{0}^T\|F_s\|_{l}^2+\sum_{k}\|G_s^k\|^2_{l+1}\,ds<\infty. 
$$
Hence the right-hand side of inequality \eqref{gronwall} is finite. 
Thus when $\tau<1/N_0$, from \eqref{gronwall}  by discrete Gronwall's lemma   
it follows that   
\begin{equation}                                                           \label{gronwall1}
\E\|e_j\|_{l}^2\leq N_0(1-N_0\tau)^{-j}
\E\int_0^T(\|F_t\|^2_{l}+\sum_{k}\|G^k_t\|^2_{l+1})\,dt
\end{equation}
for $j=1,...,n$.  
Now we are going to show that 
\begin{equation}                                                                  \label{1.7.2}
\E\|F_t\|^2_{l}+\sum_{k}\E\|G^k_t\|^2_{l+1}
\leq N\tau(\E\cK^2_{l+3}+1)  
\end{equation}
for all $t\in[0,T]$ with a constant $N=N(K,C,c,d,T,l,\Lambda_0)$. 
To estimate $\E\|F_t\|_{\varrho}^2$, first notice that due to 
Assumption \ref{assumption holder},
\begin{equation}                                              \label{holder f}
\E\|{f}^{R}_t-{f}^{R}_{\kappa_1(t)}\|_{l}^2\leq N\tau, 
\end{equation}
and  due to Assumptions 
\ref{assumption cutoff}, 
\ref{as2:regularity} 
 and 
\ref{assumption holder}
$$
\|L^{h,R}_tu^{h,R}_t-L^{h,R}_{\kappa_2(t)}u^{h,R}_{\kappa_2(t)}\|_{l}^2
\leq 2A_1(t)+2A_2(t)
$$
with 
$$
A_1(t):=\|(L^{h,R}_t-L^{h,R}_{\kappa_2(t)})u_{\kappa_2(t)}^{h,R}\|_{l}^2 
\leq N\tau\|{u}_{\kappa_2(t)}^{h,R}\|_{l+2}^2, 
$$
$$
A_2(t):=\|L_t^{h,R}(u_t^{h,R}-u_{\kappa_2(t)}^{h,R})\|_{l}^2
\leq 
N\|u^{h,R}_{\kappa_2(t)}-u^{h,R}_t\|_{l+2}^2.
$$
By virtue of Theorem (b) and Assumption \ref{assumption cutoff},
\begin{equation}                                            \label{2.7.2}
\E A_1(t)\leq N\tau \sup_{t\in[0,T]}
\E\|{u}_t^{h,R}\|_{l+2}^2\leq N\tau \E\cK^2_{l+2}. 
\end{equation}
To estimate $A_2$ we  show that 
\begin{equation}                                                \label{lipschitz}
\E\|u^{h,R}_t-u^{h,R}_s\|_{l+2}^2
\leq N|t-s|(1+\E\cK^2_{l+3})\quad\text{for all $s,t\in[0,T]$}.
\end{equation}
To this end we mollify $u^{h,R}$ in the spatial variable. We take a nonnegative 
$\phi\in C_0^{\infty}(\bR^d)$  supported on the unit ball 
in $\bR^d$ such that it has unit integral, 
and for functions $\varphi$, which are locally integrable in $x\in\bR^d$, 
we define $\varphi^{(\varepsilon)}$ by 
$$
\varphi^{(\varepsilon)}(x):
=\varepsilon^{-d}\int_{\bR}\varphi(y)\phi((x-y)/\varepsilon)\,dy, 
\quad x\in\bR^d, 
\quad\text{for $\varepsilon>0$.} 
$$
We will make use of the following known and easily verifiable properties 
of $\varphi^{(\varepsilon)}$. 
For integers $r\geq0$ and $\varepsilon>0$, 
\begin{equation}                                           \label{hasznos1}
 \|\varphi^{(\varepsilon)}-\varphi\|_{r}\leq \varepsilon\|\varphi\|_{r+1} 
\quad \text{for $\varphi\in W^{r+1}_2$}, 
\end{equation}
and 
\begin{equation}                                                 \label{hasznos2}
\|\varphi^{(\varepsilon)}\|_r\leq 
\|\varphi\|_r,\quad\|(D_i\varphi)^{(\varepsilon)}\|_{r}
=\|D_i\varphi^{(\varepsilon)}\|_{r}
\leq \frac{N_i}{\varepsilon}\|\varphi\|_{r}, \quad \text{$\varphi\in W^{r}_2$} 
\end{equation}
for $i=1,2,...,d$, where $N_i$ depends only 
on the sup and $L_1$ norms of $D_i\phi$. 
Thus by \eqref{hasznos1}
$$
\|u^{h,R}_{t}-u^{h,R}_s\|_{l+2}^2
=
\|u^{h,R}_{t}-u^{h,R}_s\|_{l+1}^2
+\sum_{i=1}^d\|D_i(u^{h,R}_{t}-u^{h,R}_s)\|_{l+1}^2
$$
$$
\leq \|u^{h,R}_{t}-u^{h,R}_s\|_{l+1}^2
+\sum_{i=1}^d\|D_i(u^{h,R}_{t}-u^{h,R}_s)^{(\varepsilon)}\|_{l+1}^2
$$
$$
+\sum_{i=1}^d\|D_iu^{h,R}_{t}-(D_iu^{h,R}_t)^{(\varepsilon)}\|^2_{l+1}
+\sum_{i=1}^d\|(D_iu^{h,R}_s-(D_iu^{h,R}_s)^{(\varepsilon)}\|^2_{l+1}
$$
\begin{align}                                                            \label{1.8.2}                                               
\leq &\|u^{h,R}_{t}-u^{h,R}_s\|_{l+1}^2      
+\sum_{i=1}^d\|D_i(u^{h,R}_{t}-u^{h,R}_s)^{(\varepsilon)}\|_{l+1}^2 \nonumber\\
&+N\varepsilon^{2}(\|u^{h,R}_{t}\|_{l+2}
+\|u^{h,R}_s\|_{l+2}). 
\end{align}
Since $u^{h,R}$ satisfies \eqref{eq:finite diff}-\eqref{eq:finite diff-ini}, 
for $0\leq s\leq t\leq T$ we have
\begin{align}                                                                      \label{2.8.2}
\|u^{h,R}_{t}-u^{h,R}_s\|_{l+1}^2\leq 2B_1+2B_2
\end{align}
with 
$$
B_1:=\Big\|\int_s^{t}(L^{h,R}_ru^{h,R}_r+f_r^R)\,dr\Big\|^2_{l+1}, 
$$
$$
B_2:=
\Big\|\int_s^{t}(M^{h,R,k}_ru^{h,R}_r+g^{R,k}_r)\,dw^k_r\Big\|_{l+1}^2,  
$$
and using \eqref{hasznos2} we have 
\begin{equation}                                                                          \label{3.8.2}
\sum_{i=1}^d\|D_i(u^{h,R}_{t}-u^{h,R}_s)^{(\varepsilon)}\|_{l+1}^2 
\leq \frac{N}{\varepsilon^2}B_1+B_3, 
\end{equation}
with 
$$
B_3:=
\sum_{i=1}^d
\Big\|\int_s^{t}(D_iM^{h,R,k}_ru^{h,R}_r
+D_ig^{R,k}_r)\,dw^k_r\Big\|_{l+1}^2. 
$$
It is easy to see that 
$$
\E B_1\leq (t-s)\int_s^{t}\E\|L^{h,R}_su^{h,R}_s
+f_s^R\|^2_{l+1}\,ds
$$
\begin{equation}                                                          \label{4.8.2}
\leq N(t-s)^2(\sup_{t\leq T}\E\|u_t\|^2_{l+3}
+\sup_{t\leq T}\E\|f_t\|^2_{l+1}). 
\end{equation}
Using It\^o's identity, for $B_2$  we obtain
\begin{align}
\E B_2=&\E\int_s^{t}
\sum_{k}\|M^{h,R,k}_ru^{h,R}_r+g^k_r\|^2_{l+1}\,dr             \nonumber\\
&\leq N(t-s)(\sup_{t\leq T}\E \|u_t^{h,R}\|^2_{l+2}
+\sup_{t\leq T}\E\|g_t\|^2_{l+1}),                                            \label{5.8.2}
\end{align}
and in the same way, for $B_3$ we have 
\begin{equation}                                                                           \label{6.8.2}
\E B_3 \leq N(t-s)(\sup_{t\leq T}\E \|u_t^{h,R}\|^2_{l+3}
+\sup_{t\leq T}\E\|g_t\|^2_{l+2}). 
\end{equation}
From  \eqref{1.8.2} through \eqref{5.8.2} we get 
\begin{equation*}                                                     
\E\|u_t^{h,R}-u_s^{h,R}\|^2_{l+2}\leq N(\varepsilon^2+(t-s)^2\varepsilon^{-2}+|t-s|)J
\end{equation*}
for every $\varepsilon>0$, where  
\begin{equation*}   
J:=\sup_{t\leq T}\{\E\|u_t^{h,R}\|^2_{l+3}+
\E\|g_t\|^2_{l+2}+\E \|f_t\|^2_{l+1}\}. 
\end{equation*}
Choosing here $\varepsilon:=|t-s|^{1/2}$ gives 
\begin{equation*}                                                        \label{7.8.2}
\E\|u_t^{h,R}-u_s^{h,R}\|^2_{l+2}
\leq 3N(t-s)J. 
\end{equation*}
Hence we obtain \eqref{lipschitz},  
since by 
Theorem \ref{theorem Gy} and Assumption \ref{assumption holder} we have 
$$
J\leq N\E\cK^2_{l+3}+H. 
$$
From \eqref{lipschitz} we get the estimate 
$$
\E A_1\leq N\tau(\E\cK^2_{l+3}+1), 
$$
which together with \eqref{2.7.2} and \eqref{holder f} 
shows that 
$\E\|F_t\|^2_{l}$ is estimated by 
the right-hand  side of the inequality \eqref{1.7.2}. 
Similarly,  by making use of \eqref{lipschitz}, we can get  
the same estimate for $\E\|G_t\|^2_{l+1}$, 
 which finishes the proof of \eqref{1.7.2}. 
From \eqref{gronwall1} and \eqref{1.7.2}  we have 
\begin{equation*}
\max_{j\leq n}\E\|e_j\|_{\varrho}^2\leq \tau N(1-N_0\tau)^{-n}(\E\cK_{\varrho+3}+1)
\end{equation*}
for $\tau<N_0^{-1}$, with a constant $N=N(K,C,H,d,l,T,\Lambda_0)$. 
Hence noticing that  
$$
\lim_{n\to\infty}(1-N_0\tau)^{-n}=e^{N_0T}, 
$$
we obtain \eqref{gyenge becsles}, 
which completes the proof of (ii). 

To prove (iii) notice first that by using (ii) with $l+1$ in place of $l$ 
we have 
\begin{equation}                                                            \label{1.10.2}
\E\sum_{i=1}^{n}\|e_i\|^2_{l+1}\leq N(1+\E\cK_{m}^2)
\end{equation}
with $m=l+4$.
Fixing a multi-index $\gamma$ as in (ii), with 
$|\gamma|=\varrho\leq l$, we revisit \eqref{error estimate 1} 
and observe that $I_i^{(5)}\leq I_i^{(4)}+I_i^{(6)}$  
and by Theorem \ref{theorem Gy} 
$$
I^{(1)}_i\leq N\tau\|e_i\|^2_{l}. 
$$
Thus summing up \eqref{error estimate 1} over $i=1,...,j$, 
and recalling also \eqref{est rev0} and \eqref{2.9.2}, we obtain 
$$
\E\max_{1\leq j\leq n}\|D^{\gamma}e_i\|^2\leq N\tau\E\sum_{i=1}^n\|e_i\|^2_l
+N\int_0^T\E\|F_s\|^2_{l}\,ds
$$
\begin{equation*}                                                          \label{4.10.2}
+N\int_0^T\E \|G_s\|^2_{l}\,ds+
\tau\sum_{i=1}^n\E\sum_{k}\|D^{\gamma}\frM_{i-1}^ke_{i-1}\|^2+\E\max_{1\leq j\leq n}\sum_{i=1}^jI^{(3)}_i.  
\end{equation*}
Hence, noticing that 
$$
\sum_{k=1}^{\infty}\|D^{\gamma}\frM_{i-1}^ke_{i-1}\|^2
\leq N\|e_i\|^2_{l+1},  
$$
by using \eqref{1.10.2} and  \eqref{1.7.2},  
we get 
\begin{equation}                                             \label{5.10.2}
\E\max_{1\leq i\leq n}\|D^{\gamma}e_i\|^2\leq N\tau(\E\cK^2_m+1)
+\E\max_{1\leq j\leq n}\sum_{i=1}^jI^{(3)}_i.
\end{equation}
Clearly,
$$
\sum_{i=1}^jI^{(3)}_i= \int_0^{j\tau}Z^k_t\,dw^k_t, \quad j=1,2,...,n, 
$$
where $(Z^k_t)_{t\in[0,T]}$ is defined by 
$$
Z^k_t=2(D^{\gamma}e_{i-1},D^{\gamma}\frM_{i-1}^ke_{i-1}+D^{\gamma}G^k_t)
\quad\text{for $t\in(t_{i-1}, t_i]$, 
$i=1,2,...,n$}. 
$$
It is easy to see that 
$$
\int_0^T\sum_{k}|Z^k_t|^2\,dt
\leq4\max_{1\leq i\leq n}\|D^{\gamma}e_i\|^2
\sum_{k}
\left(
\sum_{i=1}^n
\|D^{\gamma}\frM_{i-1}^ke_{i-1}\|^2\tau+\int_0^T\|G^k_s\|^2_{l}\,ds
\right)
$$
$$
\leq\frac{1}{36}\max_{1\leq i\leq n}\|D^{\gamma}e_i\|^4
+144
\left(
\sum_{i=1}^n
\|e_{i-1}\|_{l+1}^2\tau
+\sum_k\int_0^T\|G^k_s\|^2_{l}\,ds
\right)^2.
$$
Hence by the Davis inequality we have
$$
S_n\leq \E\sup_{t\in[0,T]}\int_0^tZ^k_s\,dw_s^k
\leq 3\E\left(\int_0^T\sum_{k}|Z^k_s|^2\,ds\right)^{1/2}
$$
$$
\leq \frac{1}{2}\E\max_{1\leq i\leq n}\|D^{\gamma}e_i\|^2+
36\tau\sum_{i=1}^n\E\|e_{i-1}\|_{l+1}^2
+36\E\int_0^T\|G_s\|^2_{l}\,ds<\infty. 
$$
Using this, and estimates \eqref{1.7.2} and \eqref{1.10.2}, 
we obtain (iii) from \eqref{5.10.2}.
\end{proof}
\medskip

By virtue of Remark \ref{remark important}, 
by using Sobolev's theorem on embedding $W^m_2$ into $C_b$ for $m>d/2$,  
we get the following corollary. 
\begin{corollary}                                                       \label{corollary time error}
(i) Let Assumptions \ref{assumption cutoff} through 
 \ref{assumption free} hold with $m>d/2$.  
Then for sufficiently small $\tau$ there exists for all $h$ and $R>0$ 
a unique solution 
$v^{h,R,\tau}=\{v^{h,R,\tau}_{i\tau}(x):x\in \G_h\cap\rm{supp}\,\zeta_R\}$ to 
\eqref{eq:discrete time-ini}-\eqref{eq:discrete time}. 

\noindent
(ii) If Assumption \ref{assumption holder} 
holds with some integer $l> d/2$ and Assumptions \ref{as2:compatibility}   
through 
 \ref{assumption free}
hold with $m>d/2+3$, then 
\begin{equation}                                 \label{gyenge becsles2}
\max_{0\leq i\leq n}\E\max_{x\in \bG_h}
|u^{h,R}_{\tau i}-v^{h,R,\tau}_i|^2
\leq N\tau(1+\E\cK_{m}^{2}),
\end{equation}
with a constant 
$N=N(K,C,H,d,T,l,\Lambda_0)$.  

\noindent
(iii) If  Assumption \ref{assumption holder} 
holds with $l>d/2+1$ and Assumptions \ref{as2:compatibility}   
through 
 \ref{assumption free}
hold with $m>d/2+4$, then 
\begin{equation} 						\label{erosebb becsles2}
\E \max_{i\leq n}\max_{x\in \bG_h}
|u^{h,R}_{\tau i}-v^{h,R,\tau}_i|^2
\leq N\tau(1+\E\cK_{m}^{2})
\end{equation}
with a constant 
$N=N(K,C,H,d,T,l,\Lambda_0)$.  
\end{corollary}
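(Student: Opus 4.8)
The plan is to obtain the corollary as a direct consequence of Theorem \ref{theorem time error} by upgrading the $W^l_2$-norm estimates proved there to pointwise bounds on the grid $\bG_h$, using the Sobolev embedding $W^l_2\hookrightarrow C_b$, which holds precisely because $l>d/2$ and whose embedding constant depends only on $d$ and $l$ (in particular, not on $h$, $R$, $i$, or $\omega$). The enabling observation, already recorded in Remark \ref{remark important}, is that the $L_2$-valued solutions $u^{h,R}$ and $v^{h,R,\tau}$ are in fact $W^m_2$-valued, so for $m>d/2$ they admit continuous modifications whose restrictions to $\bG_h\cap\mathrm{supp}\,\zeta_R$ are genuine grid solutions; throughout the argument I would work with these continuous versions.

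For part (i), Theorem \ref{theorem time error}(i) provides, for small $\tau$ and every $h$ and $R$, a unique $L_2$-valued solution $v^{h,R,\tau}$ with $v_i^{h,R,\tau}\in W^m_2$ for each $i$. Since $m>d/2$, the embedding $W^m_2\hookrightarrow C_b$ furnishes a continuous modification of each $v_i^{h,R,\tau}$, and by the compatibility statement in Remark \ref{remark important} its restriction to $\bG_h\cap\mathrm{supp}\,\zeta_R$ solves \eqref{eq:discrete time-ini}-\eqref{eq:discrete time} in the grid sense, with uniqueness inherited from the $L_2$-valued uniqueness. This yields exactly the solution $v^{h,R,\tau}=\{v^{h,R,\tau}_{i\tau}(x):x\in\bG_h\cap\mathrm{supp}\,\zeta_R\}$ asserted in (i).

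For parts (ii) and (iii) the single additional ingredient is the Sobolev inequality: there is a constant $N=N(d,l)$, independent of $h$ and $R$, with $\sup_{x\in\bR^d}|\phi(x)|^2\leq N\|\phi\|_l^2$ for every $\phi\in W^l_2$. Writing $e_i:=u^{h,R}_{\tau i}-v^{h,R,\tau}_i$, which is $W^m_2$-valued and hence $W^l_2$-valued as $m\geq l$, I would bound $\max_{x\in\bG_h}|e_i(x)|^2\leq\sup_{x\in\bR^d}|e_i(x)|^2\leq N\|e_i\|_l^2$, where on the left $e_i$ denotes its continuous version. Because the Sobolev constant $N$ does not depend on $i$ or $\omega$, it commutes with both $\E$ and $\max_i$. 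Taking expectations and then the relevant maximum, one gets $\max_{0\leq i\leq n}\E\max_{x\in\bG_h}|e_i(x)|^2\leq N\max_{i\leq n}\E\|e_i\|_l^2$ and $\E\max_{i\leq n}\max_{x\in\bG_h}|e_i(x)|^2\leq N\E\max_{i\leq n}\|e_i\|_l^2$, so that \eqref{gyenge becsles2} follows from Theorem \ref{theorem time error}(ii) and \eqref{erosebb becsles2} from Theorem \ref{theorem time error}(iii).

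The only point requiring care, and hence the sole (minor) obstacle, is the bookkeeping of the smoothness indices. Theorem \ref{theorem time error}(ii) is stated with the exact relation $m=l+3$ and (iii) with $m=l+4$, whereas the corollary assumes Assumption \ref{assumption holder} with some integer $l>d/2$ and Assumptions \ref{as2:compatibility} through \ref{assumption free} with $m>d/2+3$ (respectively $m>d/2+4$). I would resolve this by taking $l$ to be the smallest integer exceeding $d/2$ and setting $m=l+3$ (resp. $m=l+4$), which is then the smallest integer exceeding $d/2+3$ (resp. $d/2+4$); since the regularity and H\"older hypotheses are monotone in the order of derivatives, holding them with the larger values assumed in the corollary guarantees they hold with these chosen $l$ and $m$. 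With this choice the hypotheses of Theorem \ref{theorem time error} are met, and the computation above delivers the two displayed estimates with the stated dependence $N=N(K,C,H,d,T,l,\Lambda_0)$.
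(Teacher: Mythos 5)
Your proposal is correct and takes essentially the same route as the paper, which deduces the corollary from Theorem \ref{theorem time error} precisely via Remark \ref{remark important} and Sobolev's embedding of $W^l_2$ into $C_b$ for $l>d/2$. The index bookkeeping you spell out (reconciling the exact relations $m=l+3$ and $m=l+4$ in the theorem with the inequalities $m>d/2+3$ and $m>d/2+4$ in the corollary via monotonicity of the assumptions and of $\cK_m$) is left implicit in the paper, and you resolve it correctly.
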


\medskip
{\it Proof of Theorems \ref{theorem main1} and \ref{theorem main2}}. 
The solvability of \eqref{eq:discrete time-ini}-\eqref{eq:discrete time} has already been discussed above, so we need only prove the estimates \eqref{eq:full discret error} and \eqref{eq:full discret accelerated}. A natural way to separate the errors of the different types of approximations would be to write
$$
|u_{\tau i}(x)-v^{h,R,\tau}_i(x)|\leq|u_{\tau i}(x)-u_{\tau i}^h(x)|
+|u_{\tau i}^{h}(x)-u_{\tau i}^{h,R}(x)|+|u_{\tau i}^{h,R}(x)-v^{h,R,\tau}_i(x)|.
$$
However, in such a decomposition we cannot directly estimate the middle term on the right-hand side, that is, the localization error of the finite difference equation. Therefore we introduce $u^{0,R}$, the classical solution of \eqref{SPDE general}-\eqref{SPDE general-ini} with data
$$
\bar \frD:=\frD^R=(\zeta_R\psi,\zeta_R^2a,\zeta_Rb,\zeta_Rc,\zeta_R\sigma,\zeta_R\mu,\zeta_Rf,\zeta_Rg).
$$
Clearly the pair $\frD$, $\bar\frD$ satisfies Assumption \ref{assumption R}. Also, as the finite difference coefficients $\fra,\frb$ are compatible with the data $\frD$ in the sense that Assumption \ref{as2:compatibility} is satisfied, it follows that the coefficients $\fra^R,\frb^R$, as defined in \eqref{coeffcutoff}-\eqref{coeffcutoff2}, are compatible with the data $\bar\frD$ in the same sense. Therefore in the decomposition
$$
|u_{\tau i}(x)-v^{h,R,\tau}_i(x)|\leq|u_{\tau i}(x)-u_{\tau i}^{0,R}(x)|
+|u_{\tau i}^{0,R}(x)-u_{\tau i}^{h,R}(x)|+|u_{\tau i}^{h,R}(x)-v^{h,R,\tau}_i(x)|
$$
the first term can be treated by Theorem \ref{theorem cutoff}, 
the second by Theorem \ref{theorem Gy} (c), and the third by 
Corollary \ref{corollary time error} (iii). 
Adding up the resulting errors we get the estimate \eqref{eq:full discret error}, and the dependence of the constants also follows from the invoked theorems.

Similarly, for \eqref{eq:full discret accelerated} we write
\begin{align*}
|u_{\tau i}(x)-\bar v^{h,R,\tau}_i(x)|&\leq|u_{\tau i}(x)-u_{\tau i}^{0,R}(x)|
+|u_{\tau i}^{0,R}(x)-\bar u_{\tau i}^{h,R}(x)|\\&+\sum_{j=0}^r c_j|u_{\tau i}^{h/2^j,R}(x)-v^{h/2^j,R,\tau}_i(x)|,
\end{align*}
where $\bar u^{h,R}=\sum_{j=0}^r c_j u^{h/2^j,R},$ and follow the same steps as above.

\medskip
\begin{remark}
As it can be easily seen from the last step of the proof, 
Assumption \ref{assumption holder} can be weakened to $\alpha$-H\"older continuity for any fixed $\alpha>0$, at the cost of lowering the rate from $1/2$ to $\alpha\wedge(1/2)$.

To decrease the spatial regularity conditions, in particular, the term $d/2$, one would need the generalization of the results of \cite{Gy}, and subsequently, of Theorem \ref{theorem time error}, to arbitrary Sobolev spaces $W^m_p$. Partial results in this direction can be found in \cite{GG}.
\end{remark}

\bigskip
\textbf{Acknowledgements.} The authors thank the referees for their remarks and suggestions which helped to improve the presentation of the paper.

\bibliographystyle{amsplain}

\end{document}